\DeclareFontFamily{U}{BOONDOX-calo}{\skewchar\font=45 }
\DeclareFontShape{U}{BOONDOX-calo}{m}{n}{
  <-> s*[1.05] BOONDOX-r-calo}{}
\DeclareFontShape{U}{BOONDOX-calo}{b}{n}{
  <-> s*[1.05] BOONDOX-b-calo}{}
\DeclareMathAlphabet{\mathcalb}{U}{BOONDOX-calo}{m}{n}
\SetMathAlphabet{\mathcalb}{bold}{U}{BOONDOX-calo}{b}{n}
\DeclareMathAlphabet{\mathbcalb}{U}{BOONDOX-calo}{b}{n}
\DeclareFontFamily{OT1}{pzc}{}
\DeclareFontShape{OT1}{pzc}{m}{it}{<-> s * [1.10] pzcmi7t}{}
\DeclareMathAlphabet{\mathcalc}{OT1}{pzc}{m}{it}
\newcommand{\1}[1]{{\mathbf 1}_{\{#1\}}}
\newcommand{\N}{{\mathbb N}}
\newcommand{\tM}{{\widetilde{M}}}
\newcommand{\calF}{{\mathcal{F}}}
\newcommand{\calL}{{\mu}}
\newcommand{\calN}{{\mathcal{N}}}
\newcommand{\calZ}{{\mathcal{Z}}}
\newtheorem{theo}{Theorem}%[section]
\newtheorem{lem}[theo]{Lemma}
\newtheorem{cor}[theo]{Corollary}
\newtheorem{rem}[theo]{Remark}
\newcommand{\bbP}{\mathbb{P}}
\newcommand{\bbE}{\mathbb{E}}
\newcommand{\bbN}{\mathbb{N}}
\newcommand{\bbR}{\mathbb{R}}
\renewcommand{\ell}{\mathcalb{l}}
\newcommand{\err}{\mathcalc{r}}
\title{
On the large interaction asymptotics of the free energy density of the Ising chain with disordered centered external field}
\author{Orph\'ee Collin$^{1}$}
\begin{document}

\maketitle

{\footnotesize 
\noindent $^{~1}$Universit\'e Paris Cit\'e 
and Sorbonne Universit\'e, CNRS, Laboratoire de Probabilit\'es,
Statistique et Mod\'elisation, F--75013 Paris, France
\\
\noindent e-mail:
\texttt{collin@lpsm.paris}
}

\begin{abstract}
This article completes \cite{cf:GG22} by identifying explicitly the leading coefficient in the asymptotic development of the free energy density of the centered Random Field Ising Chain as the spin-spin interaction of the chain goes to infinity. The assumptions on the disorder law are much weaker than in \cite{cf:GG22}: only polynomial moments are required. 
\\[.3cm]\textbf{AMS subject classifications (2010 MSC):}
60K37,  %Processes in random environments
82B44, % Disordered systems
60K35, % Stat mech. type models
\\[.3cm]\textbf{Keywords:} random field Ising chain, disordered systems,  random matrix products
\\[.3cm] This work is licensed under a Creative Commons Attribution | 4.0 International licence (CC-BY 4.0, https://creativecommons.org/licenses/by/4.0/).
\end{abstract}

%Possible to add :
%\begin{itemize}
%\item counterexample : when h is not in L2, greedy trajectory
%\item a direct proof of $\vartheta^2/2J$ by controlling trajectories with respect to Fisher configuration.
%\item extension to finite range RFIC
%\item discussion on the non-centered case.
%\end{itemize}

%Questions 
%\begin{itemize}
%\item In the introduction, should $\mu$ have a first moment so that Kingman's theorem may be used?
%\item introduce $\eta_p$ and $\delta_p$ the exponents as functions of $p$?
%\item Talk about $J+F_\mu(J)$ and that it is even when $\mu$ is symmetric?
%\item Derive the free eneergy in $\vartheta$?
%\item For proof of theorem 3, another possibility is to control $\bbE[\log(1+e^{2(H_L-J})]$.
%\end{itemize}

%Remarks by Rafael on 9th april still to take into account 
%\begin{itemize}
%\item Fmu is nonnegative unclear to raphael?? Why ?
%\item in introduction, make more prominent the article Alea GG
%\item in introduction : Emphasize that assumptions are weaker, and that it is at least a little surprising that the coefficient is so explicit 
%\item  Also similar to block spin transform in renormalization group
%\end{itemize}

\section{Introduction}\label{sec:introduction}

%\subsection{Presentation of the model and result}

We denote by $\bbN:=\{1, 2, \dots, \}$ the set of all positive integers. Let $\mu$ be a law on $\bbR$, with a finite first moment. 
%, which we assume to be centered, with finite variance and different from $\delta_0$. 
We consider, under a probability measure denoted $\bbP_\mu$, an i.i.d.~sequence $h=(h_n)_{n\in\bbN}$ of real random variables following law $\mu$. 

\medskip

For every realization of the sequence $h=(h_n)_{n\in\bbN}$, for every $N\in\bbN$, every $(a, b)\in\{-1, +1\}^2$ and every $J\in\bbR^+$, we define the partition function of the Ising chain on $\{1, \dots, N\}$ with external field $h$, boundary conditions $(a,b)$ and spin-spin interaction $J$ as:
\begin{equation}\label{eq:defZN}
Z_{N, h}^{a,b}(J):=\sum_{\substack{\sigma\in \{-1, 1\}^{\{0, 1, \dots, N\}}\\ \text{s.t. } \sigma_0=a,\,   \sigma_N=b}} \exp\Big(-2J \sum_{n=1}^{N} \1{\sigma_n\neq \sigma_{n-1}}+\sum_{n=1}^{N} \sigma_n h_n\Big)\, .
\end{equation}
We say that \emph{configuration} $\sigma$ exhibits a \emph{spin flip} at position $n$ if $\sigma_{n}\neq \sigma_{n-1}$. 

Using Kingman's sub-additive ergodic theorem, it is straightforward to show the existence, for every $J\in\bbR$, of a real number $F_{\mu}(J)$ such that for almost every realization of $h$ (i.e. $\bbP_\mu$-almost surely), for every $(a,b)\in\{-1, +1\}^2$, we have
\begin{equation}\label{eq:convFdisc}
\frac1N \log Z_{N, h}^{a,b}(J) \underset{N\to\infty}{\longrightarrow} F_{\mu}(J)\, .
\end{equation}
The number $F_{\mu}(J)$ is called the (quenched) free energy density of the Random Field Ising Chain (RFIC) with disorder law $\mu$ and interaction $J$. We are interested in the asymptotics of $F_{\mu}(J)$ as $J\to \infty$.
%It can be expressed as:
%\begin{equation}
%F_{\mu}(J)=\lim_{N\to \infty} \frac1N \bbE\left[ \log Z_{N, h}^{a,b}(J)\right]\, .
%\end{equation}%USEFUL ?

From now on, we assume that $\mu$ has a finite second moment, is centered and is different from $\delta_0$. We denote its variance by $\vartheta^2:=\bbE_\mu [h_1^2]$, with $\vartheta \in (0, \infty)$.
%\begin{itemize}
%\item $\calL$ admits finite exponential moments: $\bbE[\exp(t h_1)]<\infty$ for all $t$ in a neighbourhood of 0;
%\item $\calL$ is centered: $\bbE[h_1]=0$;
%\item $\calL$ has a positive variance: we set $\vartheta \in(0, \infty)$ such that $\vartheta^2:=\bbE[h_1^2]$.
%\end{itemize}
%The first assumption yields in particular that $\calL$ has a finite variance, which we assume to be positive (i.e., we exclude the possibility that almost surely, for all $n\in\N$, $h_n=0$) and we set $\vartheta \in(0, \infty)$ such that
%\begin{equation*}
%\vartheta^2:=\bbE[h_1^2].
%\end{equation*}
We will consider in particular the case of the centered Gaussian law with variance $\vartheta^2$, which we denote by $\calN_{\vartheta^2}$.
The aim of this article is to show that
\begin{equation}\label{eq:mainaim}
F_{\calL}(J) \sim \frac{\vartheta^2}{2J}\, , \quad \text{ as } J\to \infty.
\end{equation}

\medskip

To prove \eqref{eq:mainaim} we will proceed in two steps. To begin with, we will cover the special case of disorder law $\calN_{\vartheta^2}$ through the use of a continuum analogue of the RFIC, introduced as a weak disorder limit in \cite{cf:MW68}, and for which the free energy density is known exactly. Then we will show \eqref{eq:mainaim} under general integrability assumptions on $\mu$ using a coarse-graining argument, together with the central limit theorem. We say that $\mu$ admits finite exponential moments, if $\bbE_\mu[\exp(t h_1)]<\infty$ for all $t$ in a neighbourhood of 0, and, for $p$ positive real number, that $\mu$ is in $L^p$ - or has a $p$-th moment - if $\bbE_\mu[|h_1|^p]<\infty$. We are going to show the following three theorems.

\begin{theo}[Gaussian disorder]\label{th:mainNv} 
As $J\to\infty$, 
\begin{equation} 
F_{\calN_{\vartheta^2}}(J)=\frac{\vartheta^2}{2J}+O\left(\frac{(\log J)^\frac12}{J^2}\right) \, .
\end{equation}
\end{theo}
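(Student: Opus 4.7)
The plan is to reduce the Gaussian case to a continuum analogue of the RFIC whose free energy admits an exact representation, following the setup of \cite{cf:MW68}. I would start from the standard transfer-matrix representation $Z_{N,h}^{a,b}(J)=\langle e_a,T_N\cdots T_1 e_b\rangle$, where each $T_n$ is a $2\times 2$ random matrix whose off-diagonal entries carry a factor $e^{-2J}$ and whose diagonal entries involve $e^{\pm h_n}$. By Furstenberg theory, $F_{\calN_{\vartheta^2}}(J)$ equals the top Lyapunov exponent of this i.i.d.\ product, which can be written as $\bbE_\pi[\phi]$, with $\pi$ the stationary law on the projective line of the Markov chain $(\theta_n)$ induced by the $T_n$ acting projectively, and $\phi$ the associated log-expansion cocycle.

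In the regime $J\to\infty$, the chain $(\theta_n)$ makes small jumps of typical size $e^{-2J}$ (modulated by $h_n$), so that after rescaling time by a factor of order $e^{2J}$ one expects convergence of $(\theta_n)$ to a diffusion $(\Theta_t)$ on the projective line. For Gaussian disorder one can match the first two infinitesimal moments exactly and identify the limit as an explicit one-dimensional diffusion whose invariant density $\pi^{\mathrm{cont}}$ solves a simple second-order ODE with closed-form solution (in terms of Bessel or exponential-integral functions). Integrating the continuum analogue of $\phi$ against $\pi^{\mathrm{cont}}$ then yields an explicit expression whose leading behaviour as $J\to\infty$ is exactly $\vartheta^2/(2J)$.

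To upgrade this into the quantitative bound $O((\log J)^{1/2}/J^2)$, I would expand the discrete one-step generator of $(\theta_n)$ in powers of $e^{-2J}$ and $h_n$ up to the required order and compare it to the continuum generator. The discrepancy is then propagated to $\pi$ via a contraction/spectral-gap estimate on the transition operator, and finally to $\bbE_\pi[\phi]$. The natural scale of the second-order correction is $1/J^2$; the $(\log J)^{1/2}$ factor should enter through a mild truncation $|h_n|\le C(\log J)^{1/2}$ needed to control the higher moments of $h_n$ that appear in the Taylor expansion of $\log T_n$ on the rare event of atypically large $|h_n|$.

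The main obstacle is the sharp comparison of the discrete and continuum Lyapunov exponents at precision $1/J^2$. A naive triangle inequality on $|\bbE_\pi[\phi]-\bbE_{\pi^{\mathrm{cont}}}[\phi^{\mathrm{cont}}]|$ is insufficient because Lyapunov exponents are notoriously non-smooth under perturbations of the driving law, so one must either carry out a perturbative expansion of the invariant measure to second order, exploiting Gaussian smoothness, or instead work directly with the exact formula available for the continuum model and extract not only its leading term but also a rigorous remainder of the required size.
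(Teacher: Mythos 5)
Your proposal identifies the right continuum object (the McCoy--Wu continuum RFIC, whose free energy has an exact Bessel-function formula), but the route you sketch to compare it with the discrete Gaussian model is not the paper's, and the key quantitative step in your route is left unresolved. You propose to pass through Furstenberg theory: write $F_{\calN_{\vartheta^2}}(J)$ as $\bbE_\pi[\phi]$ for the invariant law $\pi$ of the projective Markov chain, match the generator of that chain with the generator of a limiting diffusion, and then propagate the generator discrepancy to $\pi$ via a spectral-gap estimate. The step you call the ``main obstacle'' is in fact a genuine gap: as $J\to\infty$ the projective chain makes $O(e^{-2J})$-sized steps and its spectral gap degenerates, so a naive contraction estimate does not give control uniform in $J$, let alone control at precision $(\log J)^{1/2}/J^2$. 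You flag this and offer two escape routes (second-order perturbation of the invariant measure, or extracting a rigorous remainder from the exact continuum formula), but neither is carried out, and the first is precisely the kind of non-smooth Lyapunov-exponent perturbation you correctly warn is delicate. As it stands this is a plan, not a proof.

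The paper avoids the invariant-measure comparison entirely. It works directly at the level of partition functions, not Lyapunov exponents. It partitions $[0,N]$ into unit blocks, takes $h_n:=B_n-B_{n-1}$ (which is exactly i.i.d.~$\calN_1$), and compares the continuum block partition function $\calZ_{n-1,n,B}^{\sigma_{n-1},\sigma_n}(J)$ with the discrete block weight $\exp(-2J\1{\sigma_{n-1}\neq\sigma_n}+\sigma_n h_n)$ by elementary counting of trajectories with $j$ jumps. The block error is controlled by the oscillation $H:=\max_{0\le s\le t\le 1}|B_s-B_t|$, and taking a truncation level $M_J$ of order $(\log J)^{1/2}$ makes $\bbE[(H-M_J)_+]=O(J^{-2})$ by Cauchy--Schwarz and a Gaussian tail bound. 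This yields $\calF(J\pm M_J)+O(J^{-2})$ as two-sided bounds on $F_{\calN_1}(J)$, and the exact Bessel expansion of $\calF$ then gives the claimed $O((\log J)^{1/2}/J^2)$. Interestingly, your $(\log J)^{1/2}$ appears for the same heuristic reason (a Gaussian-tail truncation on $|h_n|$), but the mechanism in the paper is a direct coupling of partition functions rather than a generator/invariant-measure comparison, and that coupling is what makes the estimate both elementary and quantitative. If you want to salvage your approach, the missing ingredient is a uniform-in-$J$ quantitative stability result for the invariant measure of the projective chain under perturbations of the step distribution, which is substantially harder than the block coupling the paper uses.
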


\begin{theo}[Disorder with finite exponential moments]\label{th:mainL} If $\mu$ admits finite exponential moments, then
\begin{equation}
F_{\mu}(J)=\frac{\vartheta^2}{2J}+O\left(\frac{(\log J)^\frac13}{J^{\frac43}}\right)  \,, \quad \text{as } J\to \infty \, .
\end{equation}
\end{theo}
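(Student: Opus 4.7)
The plan is to reduce to the Gaussian case (Theorem~\ref{th:mainNv}) by a coarse-graining argument. Fix a block size $L = L(J) \to \infty$ to be chosen later, partition $\{1,\dots,N\}$ into $M = \lfloor N/L \rfloor$ consecutive blocks of length $L$, and let $H_k = \sum_{n=(k-1)L+1}^{kL} h_n$ be the block-summed disorder; the $H_k$ are i.i.d.~with law $\mu^{*L}$, mean $0$, and variance $L\vartheta^2$. For the lower bound, restrict the sum defining $Z_{N,h}^{a,b}(J)$ to configurations that are constant on every block; these are parametrised by $\tau \in \{-1,+1\}^M$ with $\tau_0 = a$, $\tau_M = b$, and have energy $-2J \sum_{k=1}^M \1{\tau_k \neq \tau_{k-1}} + \sum_{k=1}^M \tau_k H_k$, yielding
$$Z_{N,h}^{a,b}(J) \geq \tilde{Z}_{M,H}^{a,b}(J)$$
where $\tilde{Z}$ is the RFIC partition function on $M$ sites with disorder $H$ and interaction $J$. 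Passing to the limit $N \to \infty$ gives $F_\mu(J) \geq L^{-1} F_{\mu^{*L}}(J)$. For the matching upper bound, one decomposes $Z_{N,h}^{a,b}(J) = \sum_{\eta_1,\dots,\eta_{M-1}} \prod_{k=1}^M Z_{L,h^{(k)}}^{\eta_{k-1},\eta_k}(J)$ along the spin values $\eta_k = \sigma_{kL}$ at block boundaries, and argues that for large $J$ each block factor is well approximated by the block-constant contribution $\exp(-2J\,\1{\eta_{k-1}\neq\eta_k} + \eta_k H_k)$ up to a small multiplicative correction; the finite exponential moments of $\mu$ provide, on a high-probability event, the control on the individual $h_n$'s needed to bound these within-block fluctuations.

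It remains to compare $L^{-1} F_{\mu^{*L}}(J)$ with $\vartheta^2/(2J)$. The central limit theorem (quantified, under finite exponential moments, by Berry-Esseen or sharper Edgeworth-type bounds) says that $\mu^{*L}$ is close to $\calN_{L\vartheta^2}$ at rate $O(L^{-1/2})$ at the level of distributions. This closeness has to be promoted to a comparison of free energies, and a Lindeberg-type block-by-block replacement argument is the natural vehicle: using that $\log Z_{N,h}^{a,b}(J)$ is a smooth, convex function of $h$ with uniformly bounded first three derivatives in each $h_n$ (the first two being the local magnetisation $\langle \sigma_n \rangle \in [-1,1]$ and $1-\langle\sigma_n\rangle^2 \in [0,1]$), one can swap the disorder in one block at a time from $\mu^{*L}$ to $\calN_{L\vartheta^2}$ and control the remainder via moment estimates on the block sums. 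Once $|L^{-1} F_{\mu^{*L}}(J) - L^{-1} F_{\calN_{L\vartheta^2}}(J)|$ is controlled, Theorem~\ref{th:mainNv} applied to the Gaussian disorder (after a scaling argument to accommodate the variance $L\vartheta^2$) gives $L^{-1} F_{\calN_{L\vartheta^2}}(J) = \vartheta^2/(2J) + O((\log J)^{1/2}/J^2)$. Balancing this Gaussian error against the CLT comparison error by choosing $L$ of order $J^{2/3}$ up to logarithmic corrections should produce the announced bound $O((\log J)^{1/3}/J^{4/3})$.

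The main obstacle is precisely this CLT comparison at the level of free energies: $F_{\mu^{*L}}(J)$ is the Lyapunov exponent of a product of i.i.d.~$2\times 2$ random matrices whose law depends on $\mu^{*L}$ in a strongly nonlinear way, so a one-dimensional CLT for $\mu^{*L}$ does not translate directly into an estimate on $|F_{\mu^{*L}}(J) - F_{\calN_{L\vartheta^2}}(J)|$. A naive Lindeberg swap produces an error proportional to the mismatch of third moments between $\mu^{*L}$ and $\calN_{L\vartheta^2}$, which is $O(L \,\bbE_\mu[|h_1|^3])$ per block and hence too crude after summing over $M$ blocks; obtaining the sharper bound required for the $(\log J)^{1/3}/J^{4/3}$ rate needs a more refined analysis, either by going to higher-order Taylor remainders with cancellations coming from the specific algebraic structure of $\log Z$, or by a direct perturbative/spectral study of the random matrix products defining the free energy. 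Verifying that, after optimising in $L$, the various errors balance exactly at the announced scale is the technical heart of the proof.
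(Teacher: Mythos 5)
Your overall skeleton matches the paper's: coarse-grain into blocks of length $L$, compare $F_\mu$ with $\frac1L F_{\mu^{*L}}$, compare $\mu^{*L}$ with $\mathcal{N}_{L\vartheta^2}$ via the CLT, then invoke Theorem~\ref{th:mainNv} using the exact identity $\frac1L F_{\mathcal{N}_{L\vartheta^2}}(J)=F_{\mathcal{N}_{\vartheta^2}}(J)$. But there is a genuine gap at precisely the point you single out as ``the technical heart'': you propose a Lindeberg swap to pass from $F_{\mu^{*L}}$ to $F_{\mathcal{N}_{L\vartheta^2}}$ and correctly observe that it does not obviously close, and you leave it unresolved. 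The paper avoids this entirely with a much more elementary device, Lemma~\ref{lem-W1}: for \emph{any} two disorder laws $\nu,\nu'$ and all $J$, $|F_\nu(J)-F_{\nu'}(J)|\le W_1(\nu,\nu')$, proved by a direct coupling of the disorder sequences — since $\sigma_n h_n \le \sigma_n h_n' + |h_n-h_n'|$ one has $Z_{N,h}\le Z_{N,h'}\exp(\sum_n|h_n-h_n'|)$, and the law of large numbers does the rest. This bound is \emph{uniform in $J$}, which at first looks useless, but after coarse-graining one divides by $L$, so the relevant error is $\frac1L W_1(\mu^{*L},\mathcal{N}_{L\vartheta^2})=O(1/L)$ by the Wasserstein CLT (Lemma~\ref{lem-CLT}, the boundedness of $W_1(\mu^{*L},\mathcal{N}_{L\vartheta^2})$ under a finite third moment). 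No differentiation of $\log Z$ in $h$, no third-moment mismatch, no spectral analysis of transfer matrices is needed.

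Two further inaccuracies stem from not having this lemma in hand. First, your choice $L\asymp J^{2/3}$ cannot work: with the $O(1/L)$ Wasserstein error that term alone would be $J^{-2/3}$, far larger than the target $(\log J)^{1/3}/J^{4/3}$. The correct balance is between $1/L$ and the shift error $\tilde M/J^2$ with $\tilde M = M+\frac12\log L \asymp \sqrt{L\log J}$ (needed to make $\bbP[H_L>M]$ negligible under the exponential-moment assumption, Lemma~\ref{th:lemboundEmuexpo}), giving $L\asymp J^{4/3}/(\log J)^{1/3}$ and $M\asymp J^{2/3}(\log J)^{1/3}$ as in \eqref{eq:choiceLMtexpo}. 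Second, the upper bound in the coarse-graining step is not merely a matter of ``the block factor is well approximated by the block-constant contribution up to a small multiplicative correction''; one needs the quantitative Lemma~\ref{th:lemcompZdiscrete}, which pays a $\frac12\log L$ shift in $J$ and an additive cost controlled by $(H_L-M)_+$, and that shift feeds directly into the balancing above. Your sketch names the right ingredients but misses the short-cut that makes the whole argument tractable.
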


%We denote $\delta_p=\frac43$ for $p\in[3, \infty)$ and $\delta_p=2-\frac2p$ for $p\in[2, 3)$. We observe that $p\mapsto \delta_p$ is increasing and continuous on $[2, \infty)$, ranging over $[1, \frac43)$.
%We denote $\eta_p= \frac{4p}{3p+2}$ for $p\in [3, \infty)$ and $\eta_p=\frac{2p^2-2p}{p^2+p-1}$ for $p\in[2, 3)$. We observe that $p\mapsto \eta_p$ is increasing and continuous on $[2, \infty)$, ranging over $[\frac45, \frac43)$ and taking value 1 at $\frac{3+\sqrt{5}}2$.

\begin{theo}[Disorder with finite polynomial moments]\label{th:mainLpoly}
Let $p\in [3, \infty)$. If $\mu$ is in $L^p$, then
\begin{equation}\label{eq:mainLpoly>3}
\frac{\vartheta^2}{2J} +O\left(\frac{(\log J)^\frac13}{J^{\frac43}}\right) \le F_{\calL}(J) \le \frac{\vartheta^2}{2J}+O\left(\frac{1}{J^{\frac{4p}{3p+2}}}\right) \,, \quad \text{as } J\to \infty \, .
\end{equation}

Let $p\in [2, 3)$. If $\mu$ is in $L^p$, then
\begin{equation}\label{eq:mainLpoly<3}
\frac{\vartheta^2}{2J} +O\left(\frac{(\log J)^{\frac{p-1}{2p}}}{J^{\frac{2(p-1)}{p}}}\right) \le F_{\calL}(J) \le \frac{\vartheta^2}{2J}+O\left(\frac{1}{J^{\frac{2p(p-1)}{p^2+p-1}}}\right) \,, \quad  \text{as } J\to \infty \,.
\end{equation}

In particular, if $\mu$ is in $L^p$ for some $p>\frac{3+\sqrt{5}}2\approx 2.618$, then the error terms are $o\left(\frac1J\right)$ and \eqref{eq:mainaim} holds.
\end{theo}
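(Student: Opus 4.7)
The plan is to deduce Theorem~\ref{th:mainLpoly} from the Gaussian case (Theorem~\ref{th:mainNv}) through the coarse-graining strategy announced after the statement of Theorem~\ref{th:mainNv}. One partitions $\{1,\dots,N\}$ into consecutive blocks of some length $\ell=\ell(J)$ and replaces each block by an effective single site carrying the aggregated field $S_\ell=\sum_{n=1}^{\ell}h_n$. By the central limit theorem, $\vartheta^{-1}\ell^{-1/2}S_\ell$ is approximately standard Gaussian, so the coarse-grained chain is close to a Gaussian RFIC with per-block field variance $\ell\vartheta^2$, whose free energy is controlled by Theorem~\ref{th:mainNv}. At the level of densities, both effects --- aggregation of blocks and Gaussian approximation --- formally produce the leading $\vartheta^2/(2J)$; the quantitative error depends on the quality of the CLT on the block, which is where the moment assumption on $\mu$ enters.

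For the lower bound I expect a convexity/restriction argument yielding an inequality roughly of the form $F_\mu(J)\ge \tfrac{1}{\ell}F_{\widetilde\nu_\ell}(\ell J)$, where $\widetilde\nu_\ell$ is built from the distribution of $S_\ell/\sqrt{\ell}$, obtained by forbidding intra-block spin flips (a restriction of the configuration sum). One then compares $\widetilde\nu_\ell$ with $\calN_{\vartheta^2}$ through a Wasserstein-type coupling: for $p\ge 3$ the Berry--Esseen rate $O(\ell^{-1/2})$ is available and yields exactly the error already present in Theorem~\ref{th:mainL}; for $p\in[2,3)$ only a weaker polynomial rate coming from the truncation of $\mu$ at level $\ell^{1/2}$ is available, and this is what produces the logarithmic factor and the slower polynomial decay in \eqref{eq:mainLpoly<3}. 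Optimising $\ell$ as a function of $J$, against the Gaussian error $(\log J)^{1/2}/J^2$ inherited from Theorem~\ref{th:mainNv}, closes the lower half.

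The upper bound is the more delicate direction and requires an additional truncation step on top of coarse-graining. Writing $h_n=h_n\1{|h_n|\le T}+h_n\1{|h_n|>T}=:h_n^T+r_n$, the truncated variables $h_n^T$ are bounded and therefore have exponential moments, so the method of Theorem~\ref{th:mainL} applies to $h^T$ with constants that depend on $T$. The tail contribution $r_n$ can be absorbed through the elementary estimate $\bbE_\mu[|r_n|]\le T^{1-p}\bbE_\mu[|h_1|^p]$ from Markov's inequality. The main obstacle will be the joint optimisation of the block length $\ell(J)$ and of the truncation threshold $T(J)$: raising $T$ spoils the $T$-dependent constants carried over from Theorem~\ref{th:mainL}, while lowering $T$ lets too much polynomial tail through. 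Balancing these two competing effects delivers the exponents $\tfrac{4p}{3p+2}$ (for $p\ge 3$) and $\tfrac{2p(p-1)}{p^2+p-1}$ (for $p\in[2,3)$) announced in the theorem. Finally, the threshold $p>(3+\sqrt 5)/2$ for \eqref{eq:mainaim} to hold is read off by asking the upper-bound exponent $2p(p-1)/(p^2+p-1)$ to exceed $1$, that is $p^2-3p+1>0$.
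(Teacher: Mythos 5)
Your lower-bound plan is essentially the paper's: coarse-grain into blocks of length $L$, use the easy block restriction to get $F_\mu(J)\ge\tfrac1L F_{\mu^{\ast L}}(J)$, pass to $\calN_{L\vartheta^2}$ via a Wasserstein-$1$ bound and a quantitative CLT (Rio's rates, $O(1)$ for $p\ge3$ and $O(L^{(3-p)/2})$ for $p\in[2,3)$), then coarse-grain the Gaussian model back down and invoke Theorem~\ref{th:mainNv}. One small slip: the restriction bound keeps $J$ fixed, i.e.\ $F_\mu(J)\ge\tfrac1L F_{\mu^{\ast L}}(J)$, not $F_{\widetilde\nu_\ell}(\ell J)$ --- flipping at a block boundary still costs $2J$, not $2\ell J$.

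Your upper-bound strategy, however, departs genuinely from the paper and has an unaddressed gap. You propose truncating the field at level $T$, applying the exponential-moment machinery of Theorem~\ref{th:mainL} to $h^T$, and absorbing the tail by $\bbE_\mu[|h_1|\1{|h_1|>T}]\le T^{1-p}\bbE_\mu[|h_1|^p]$. But $\mu^T$ is \emph{not centered} and its variance is not $\vartheta^2$; both $\bbE[h_1\1{|h_1|>T}]$ and $\bbE[h_1^2\1{|h_1|>T}]$ contribute additional error terms you would need to track (and the centered-field asymptotics $\sim\vartheta^2/(2J)$ is \emph{not} robust under adding a nonzero mean to the field). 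Moreover, the range of validity $M\le cL$ in the exponential-moment bound of Lemma~\ref{th:lemboundEmuexpo} has $c\sim\vartheta^2/T$ for a variable bounded by $T$, so raising $T$ shrinks the admissible window for $M$. These effects couple into a three-parameter optimisation $(L,M,T)$ whose outcome you have not carried out, so it is not established that you recover the exponents $\tfrac{4p}{3p+2}$ and $\tfrac{2p(p-1)}{p^2+p-1}$. The paper instead works directly with $\mu$ and controls the block error via Doob's and Rosenthal's inequalities to get $\bbE_\mu[H_L^p]=O(L^{p/2})$; it also sharpens the per-block bound (Lemma~\ref{th:lemcompZandFdiscreterevisited}) so that the error on the bad event $\{H_L>M\}$ involves $\log 2+\tfrac2L\sum_n|h_n|$ rather than $(H_L-M)_+$, which is what makes the $p$-moment control close. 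That refinement is missing from your proposal, and without it (or a worked-out truncation scheme) the claimed exponents do not follow.
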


%\begin{rem}
%Under the only hypothesis that $\calL$ has a finite second moment, we will also show that $F_{\calL}(J)=O\left(\frac1{J}\right)$.
%\end{rem}

Often, in statistical physics, properties concerning the typical configurations of the system can be extracted from the free energy density. We are now going to discuss some general properties of the free energy density of the RFIC, and use our results to derive bounds on the mean density of spin changes in the RFIC in the thermodynamic limit. 

Let us denote by $P^{a, b}_{N, h, J}$ the RFIC measure, i.e., the Gibbs measure on $\{-1, +1\}^N$ associated to the partition function $Z^{a, b}_{N, h}(J)$. A classical computation yields expressions for the expectation and the variance of the density of spin changes under the RFIC measure: %for every realization of $h$, for all $J>0$, $N\in\bbN$ and all $(a,b)\in\{-1, +1\}^2$, we have
\begin{equation}\label{eq:derisdenswalls}
E^{a, b}_{N, h, J}\left[\frac1N  \sum_{n=1}^{N} \1{\sigma_n\neq \sigma_{n-1}}\right] = - \frac12 \partial_J\left(\frac1N \log Z^{a, b}_{N, h}(J)\right) \, .
\end{equation}
and 
\begin{equation}\label{eq:deris2denswalls}
\mathrm{Var}^{a, b}_{N, h, J}\left[\frac1N  \sum_{n=1}^{N} \1{\sigma_n\neq \sigma_{n-1}}\right] = \frac1{4N} \partial_J^2\left(\frac1N \log Z^{a, b}_{N, h}(J)\right) \, .
\end{equation}
Since the left-hand sides are non-negative, we see that $J\mapsto \frac1N \log Z^{a, b}_{N, h}(J)$ is non-increasing and convex on $\bbR$. Taking the $\bbP_\mu$-almost sure limit, as $N\to \infty$, we obtain that also $F_\mu$ is non-increasing and convex on $\bbR$. We mention also that $F_\mu$ is non-negative: this can be seen by lower bounding the partition function by the contribution given in \eqref{eq:defZN} by $\sigma=(a, b,b,\dots, b, b)$; the resulting bound goes to 0 as $N\to\infty$ by the law of large numbers.% Also : expression of $F_\mu(0)$.

Furthermore, the two-by-two matrix $( Z_{N, h}^{a,b}(J))_ {(a, b)\in\{-1, +1\}^2}$ can be expressed (see \cite{cf:CGG19}) as a product of i.i.d. random matrices, distributed under a law determined by $\mu$. Consequently, Furstenberg's theory can be invoked: the free energy density $F_\mu(J)$ is the top Lyapunov exponent of this random matrix product and has an integral expression involving the invariant measure of a certain Markov chain on $(0, \infty)$. Then, using results by D. Ruelle (see \cite{cf:R79}, or also \cite{cf:D08}), $F_\mu$ is analytic on $(0, \infty)$. %[OR $\bbR$ ??].

As a consequence of convexity and twice differentiability, we can take the limit $N\to\infty$ in \eqref{eq:derisdenswalls} and \eqref{eq:deris2denswalls}. We see that the variance goes to 0 as $N\to\infty$, so we obtain, for all $J>0$ and all $(a,b)\in\{-1, +1\}^2$, $\bbP_\mu$-almost surely, the convergence
%get that the first derivative of $F_{\mu}$ is linked to the mean density of spin changes in the RFIC, in the thermodynamic limit: 
 
\begin{equation}\label{eq:derisdenswallslim}
\frac1N  \sum_{n=1}^{N} \1{\sigma_n\neq \sigma_{n-1}} \underset{N\to\infty}{\longrightarrow} -\frac12 F'_{\calL}(J)\,
\end{equation}
in $P^{a, b}_{N, h, J}$-probability.

From our main theorems, we derive the following asymptotic results on the limiting quantity appearing in \eqref{eq:derisdenswallslim}.
\begin{cor}\label{th:cormain} 
As $J\to\infty$, 
\begin{equation}\label{eq:cormain}
-\frac12 F'_{\mu}(J)=\frac{\vartheta^2}{4J^2}+O(\varepsilon_J) \, ,
\end{equation}
%where $\varepsilon_J$ is equal 
%to $\frac{(\log J)^\frac14}{J^{\frac52}}$ in the case of Gaussian disorder,
%to $\frac{(\log J)^\frac16}{J^{\frac{13}6}}$ in the case of exponential integrability,
%and, in the case of polynomial integrability, to $\frac1{J^{\frac{3+\eta_p}2}}$ for $p\in(\frac{3+\sqrt{5}}2, \infty)$ and to $\frac{1}{J^{\eta_p+1}}$ for $p\in[2,\frac{3+\sqrt{5}}{2}]$, where $\eta_p$ denotes the exponent in the upper bound in Theorem \ref{th:mainLpoly}.

where $\varepsilon_J$ is equal,
\begin{itemize}
 \item
 in the case of Gaussian disorder, to $\frac{(\log J)^\frac14}{J^{\frac52}}$,
 \item  
in the case of exponential integrability, to $\frac{(\log J)^\frac16}{J^{\frac{13}6}}$,
 \item  
and, in the case of polynomial integrability, denoting by $\eta_p$ the exponent in the upper bound in Theorem \ref{th:mainLpoly}, to $\frac1{J^{\frac{3+\eta_p}2}}$ for $p\in(\frac{3+\sqrt{5}}2, \infty)$ and to $\frac{1}{J^{\eta_p+1}}$ for $p\in[2,\frac{3+\sqrt{5}}{2}]$.
\end{itemize}

%(Gaussian disorder) As $J\to\infty$, 
%\begin{equation}\label{eq:cormaingaus}
%-\frac12 F'_{\calN_{\vartheta^2}}(J)=\frac{\vartheta^2}{4J^2}+O\left(\frac{(\log J)^\frac14}{J^{\frac52}}\right) \, .
%\end{equation}
%(Exponential integrability) As $J\to\infty$, 
%\begin{equation}\label{eq:cormaingen}
%-\frac12 F'_{\calL}(J)=\frac{\vartheta^2}{4J^2}+O\left(\frac{(\log J)^\frac16}{J^{\frac{13}6}}\right) \,.
%\end{equation}
%(Polynomial integrability) [ADAPT LOWER BOUND]
%As $J\to\infty$, 
%\begin{equation}\label{eq:cormaingenpoly}
%-\frac12 F'_{\calL}(J)=\frac{\vartheta^2}{4J^2}+O\left(\frac{1}{J^{\frac{3+\eta_p}2}}\right) \,.
%\end{equation} 
\end{cor}

\begin{proof}
By convexity, for every $J>0$ and every $a\in(0, J]$, we have
\begin{equation}\label{eq:cormainconvex}
\frac{F_{\calL}(J)-F_{\calL}(J-a)}{a} 
\le F'_{\calL}(J) 
\le \frac{F_{\calL}(J+a)-F_{\calL}(J)}{a}\,.
\end{equation}
In the Gaussian case, apply this with $a=J^\frac12 (\log J)^\frac14$ to derive \eqref{eq:cormain} from Theorem \ref{th:mainNv}. In the case of exponential integrability, apply it instead with $a=J^\frac56 (\log J)^\frac16$ to derive \eqref{eq:cormain} from Theorem \ref{th:mainL}. Finally, in the case of polynomial integrability, when $\eta_p >1$, take $a=J^{\frac{3-\eta_p}2}$ to derive \eqref{eq:cormain} from Theorem \ref{th:mainLpoly}, and when $\eta_p\le 1$, recall that $F_\mu'$ is non-positive, and use the lower bound in \eqref{eq:cormainconvex} with $a=J/2$ and the non-negativity of $F_\mu$.
\end{proof}

\bigskip

Let us now discuss the links with the existing literature.
In \cite{cf:NL86}, the free energy  density $F_\calL(J)$ is computed exactly for two specific choices of the law $\calL$. 
In \cite{cf:GG22}, under the assumptions that $\calL$ admits finite exponential moments, that $\calL$ has a density with respect to the Lebesgue measure and that this density is
% uniformly $\theta$-Hölder continuous with a $\theta\in(0,1]$,
uniformly Hölder continuous, a much more detailed asymptotic development for $F_{\calL}(J)$ is obtained, precisely:
\begin{equation} \label{eq:GG22}
F_{\calL}(J) = \frac{\kappa_1}{2J+\kappa_2} +O\left(\exp(-cJ)\right), \quad  \text{ as } J\to \infty,
\end{equation}
with constants $\kappa_1\in (0, \infty)$,  $\kappa_2\in\bbR$ and $c\in(0, \infty)$ depending on $\mu$.
A formula appears in \cite{cf:GG22} for the value of $\kappa_1$, but it is not very explicit: it is in terms of an integral involving the invariant measure of a certain Markov chain on $(0, \infty)$, see equation (3.12) in \cite{cf:GG22}. 
In this perspective, the contribution of the present work is to show that $\kappa_1$ is simply the variance of $\calL$, which comes as a surprise: one would rather only expect that it is close to the variance \emph{asymptotically} (in the weak disorder limit, i.e., when the variance goes to 0). We mention that in \cite{cf:NL86} the leading coefficient is not recognized as being the variance of $\calL$. Finally, we stress that our result holds in a much more general framework than that of \cite{cf:GG22}, regarding the assumptions required on $\calL$.

A description of the typical configurations of the RFIC was proposed in the physical literature, by D. Fisher, P. Le Doussal and C. Monthus, in \cite{cf:FLDM01}. The typical configurations are conjectured to be \emph{close} to a given configuration which we call \emph{the Fisher configuration}, obtained through a renormalization procedure. This configuration is determined by the disorder sequence $h$ and has a density of spin changes which is equivalent to $\frac{\vartheta^2}{4 J^2}$ as $J\to\infty$. For more details on this, see \cite{cf:CGH2}, where an estimation of the discrepancy density between typical configurations and the Fisher configuration is obtained. Our present results concerning the free energy density and the mean density of spin changes are in accordance with the predictions of Fisher, and are beyond the reach of \cite{cf:CGH2}.
In the present article, we do not make use of Fisher's approach. Let us however mention that it is possible to use the Fisher configuration to derive the lower bound $F_\mu(J)\ge \frac{\vartheta^2 +o(1)}{2J}$, as $J\to  \infty$: simply by lower bounding the partition function by the contribution given only by the Fisher configuration, whose spin domains are well-understood, at least to first order, see Theorem 3 in \cite{cf:CGH2}.

\section{Comparison of the discrete partition function with Gaussian disorder with the continuum partition function}

In this section, we are going to prove Theorem \ref{th:mainNv}. In order to do so, we turn to the continuous analogue of the model. The continuum RFIC was introduced in \cite{cf:MW68} at the level of the free energy as a weak disorder limit of the discrete RFIC. The partition function and the continuum model themselves have been introduced in \cite{cf:CGH1}. We now present the definitions of the partition function and of the (quenched) free energy density.

Consider, under a probability $\bbP$, a standard Brownian motion $B=(B_t)_{t\in[0, \infty)}$. 
For every $J\in\bbR$ and $a\in\{-1, +1\}$, denote by $\mathbf{P}_J^a$ the law of a Poisson process $(s_t)_{t\ge 0}$ taking values in $\{-1,1\}$, with $s_0=a$ and jump rate equal to $\exp(-2 J)$, and denote by $\mathbf{E}_J^a$ the associated expectation. Then, for every realization of $B$, every $\ell\ge 0$, every $(a, b)\in\{-1, +1\}^2$ and every $J\ge 0$, we define the partition function of the continuum Ising chain on $[0, \ell]$ with external field $B$, boundary conditions $(a,b)$ and interaction $J$ as:
\begin{equation}\label{eq:defZconti}
\calZ_{\ell, B}^{a,b}(J):=\exp(\ell e^{-2J}) \;  \mathbf{E}_J^a\left[ 
\exp \left( \int_0^\ell s_t \mathrm{d}B_t \right)\1{s_\ell=b}
\right]\, .
\end{equation}
For almost every realization of $B$, i.e. $\bbP$-almost surely, for every $(a, b)\in\{-1, +1\}^2$, we have (see \cite{cf:CGG19} together with Lemma 5.1 of \cite{cf:CGH1}):
\begin{equation}\label{eq:convFconti}
\frac1\ell \log \calZ_{\ell,J,B}^{a,b}(J) \underset{\ell \to \infty}{\longrightarrow} \calF(J)\, ,
\end{equation}
where $\calF(J)$ is called the (quenched) free energy density of the continuum RFIC with interaction $J$, and:
\begin{equation}\label{eq:devFconti}
\calF(J):= \frac{e^{-2J}K_{-1}(e^{-2J})}{K_0(e^{-2J})} \sim \frac{1}{2J+\log 2+\gamma_{EM}}+O(e^{-4J}), \quad \text{ as } J\to \infty.
\end{equation}
In this expression, $K_{-1}$ and $K_0$ are the modified Bessel functions of second kind of index respectively -1 and 0, and $\gamma_{EM}$ is the Euler-Mascheroni constant. Only the developement $\calF(J)=\frac{1}{2J}+O(\frac1{J^2})$ is relevant for our purpose.

For the sake of simplicity, we choose to prove for now Theorem \ref{th:mainNv} when $\vartheta=1$. Remark \ref{rem:theta} will be devoted to generalizing the proof to any $\vartheta\in(0, \infty)$. 

We are going to compare the partition function of the discrete Ising chain with disorder law $\calN_{1}$ with the continuum partition function we just introduced.
This comparison is based on the following heuristic. Splitting $\bbR^+$ into the segments $[n-1, n], n\in\bbN$, we observe that:
\begin{itemize}
\item inside each segment, the Poisson parameter of the number of jumps is $e^{-2J}$,
\item the disorder energies over the segments, i.e., the variables $B_{n}-B_{n-1}$, are i.i.d. and distributed under $\calN_1$.
\end{itemize}
Thus, it seems reasonable to modify the configurations by artificially moving the positions of the spin changes to their integer part (this means slightly modifiying the energy $\int_{0}^\ell s_t \mathrm{d} B_t$ collected): then, we see the \emph{discrete} partition function associated to the sequence $(B_n-B_{n-1})_{n\in\bbN}$ appear. Keeping in mind that we expect the mean number of jumps to be of order $\frac 1{J^2}$, it seems likely that the error concerning the collected disorder energy is not too big, and that this method should yield the desired estimate. 

%This approach is backed by the following considerations. In \cite{cf:CGH1} the typical configurations of the continuum RFIC measure (the Gibbs meaure associated to the continuum partition function introduced in \eqref{eq:defZconti}) have been studied, based on the description provided by \cite{cf:FLDM01}, and the results suggest that their spin domains have length of order $J^2$. Therefore, for any fixed segment of length 1, we heuristically assume the probability, under the continuum RFIC measure, of having a spin change in the segment to be small - roughly, dominated by $O(J^{-2})$ - hence the error term in the energy should remain of the same order.

First, we need to slightly generalize the definition of this continuum partition function. For every real numbers $0\le \ell \le \err$, we set: 
\begin{equation}\label{eq-defBgen}
\begin{aligned}
\calZ_{\ell, \err, B}^{a,b}(J)
& : =\exp((\err-\ell) e^{-2J}) \mathbf{E}_J^a\left[ 
\exp \left( \int_\ell^\err s_t  \mathrm{d} B_t \right)\1{s_\err=b} \Big| s_\ell=a
\right]\, .
\end{aligned}
\end{equation}
Observe that $\calZ_{\ell, B}^{a,b}(J)=\calZ_{0, \ell, B}^{a,b}(J)$ and that $\calZ_{\ell, \err, B}^{a,b}(J)=\calZ_{\err-\ell, \Theta^\ell B}^{a,b}(J)$ if we set  $\Theta^\ell B:=(B_{\ell+t}-B_{\ell})_{t\in[0, \infty)}$.

Following our heuristic, we rewrite the continuum partition function defined in \eqref{eq:defZconti} in the following way. Take $N$ a positive integer and to any (cadlag) trajectory $(s_t)_{0\le t \le N} \in \{-1, 1\}^{[0,N]}$, associate the configuration  $\sigma\in \{-1, 1\}^{\{0, 1, \dots, N\}}$ defined by:
 \[\sigma_k = s_{k}, \qquad k=0, \dots, N.\]
%Partitioning the set of (cadlag) trajectories $(s_t)_{0\le t \le N}$ according to this application, we rewrite the expectation in \eqref{eq:defZconti} to obtain:
To each $\sigma$ corresponds a set of trajectories $(s_t)_{0\le t \le N}$, we rewrite the expectation in \eqref{eq:defZconti} according to this partitioning to obtain:
\begin{equation}\label{eq-rewriteAregroupconti}
 \calZ_{N,B}^{a,b}(J)=
 \sum_{\substack{\sigma\in \{-1, 1\}^{\{0, 1, \dots, N\}}\\ \text{s.t. } \sigma_0=a,\,   \sigma_N=b}} 
 \prod_{n=1}^N  \calZ_{n-1, n, B}^{\sigma_{n-1}, \sigma_n}(J) \, .
\end{equation}
We define the sequence $h=(h_n)_{n\in\bbN}$ by setting $h_n:=B_{n}-B_{n-1}$. As already stressed, $h$ is i.i.d with law $\calN_{1}$. Our aim is to compare the above sum with:
\begin{equation}
Z_{N, h}^{a, b}(J)
=\sum_{\sigma\in \{-1, 1\}^{\{0, 1, \dots, N\}} : \sigma_0=a, \sigma_N=b}
\prod_{n=1}^N \exp(-2J \1{\sigma_{n-1} \neq  \sigma_n}+\sigma_n h_n)\, .
\end{equation}
so naturally we are going to compare the continuum partition function on the \emph{block} $[n-1, n]$, i.e $\calZ_{n-1, n, B}^{\sigma_{n-1}, \sigma_n}(J)$, with $\exp(-2J \1{\sigma_{n-1} \neq  \sigma_n}+\sigma_n h_n)$. This comparison will be provided by the next lemma, that without loss of generality we formulate only for the first block $[0, 1]$. 
We introduce the random variable
\begin{equation}\label{eq:defHconti}
H:=\max_{0\le s\le t\le 1} | B_s-B_t|\, .
\end{equation}

\begin{lem}\label{th:lemcompZconti}
 For every realization of $B$, for every $(a, b)\in\{-1, +1\}^2$, for every $J\ge 0$ and $M\ge 0$, we have
\begin{equation}\label{eq:lowbdZconti}
 \calZ_{1, B}^{a, b}(J) \ge \exp\left(-2(J+M) \1{a \neq  b}+bB_{1}-2(H-M)_+\right)\, ,
\end{equation}
and %if $J-M\ge 0$,
\begin{equation}\label{eq:uppbdZconti}
\calZ_{1, B}^{a, b}(J) \le \exp\left(-2(J-M) \1{a \neq  b}+b B_{1} +  2(H-M)_+ +e^{2(H-J)}\right)\,.
\end{equation}
\end{lem}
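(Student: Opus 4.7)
My plan is to open up the Poisson-path representation of $\calZ_{1,B}^{a,b}(J)$. Writing $\lambda := e^{-2J}$ and using that, conditionally on the number of jumps $n$, the jump times are distributed as ordered uniforms on $[0,1]$, the factor $\exp(e^{-2J})$ in \eqref{eq:defZconti} exactly cancels the $e^{-\lambda}$ coming from the Poisson law, leading to
\begin{equation}
\calZ_{1,B}^{a,b}(J) = \sum_{\substack{n \geq \1{a \neq b} \\ (-1)^n = ab}} e^{-2Jn} \int_{0 < \tau_1 < \cdots < \tau_n < 1} \exp\!\left(\int_0^1 s_t \, \mathrm{d}B_t\right) \mathrm{d}\tau_1 \cdots \mathrm{d}\tau_n\,.
\end{equation}
Since $s_t - b$ takes values in $\{0,-2b\}$, for a trajectory with jumps at $\tau_1 < \cdots < \tau_n$ one has
\begin{equation}
\int_0^1 s_t \, \mathrm{d}B_t = bB_1 - 2b \sum_{k=1}^K (B_{\beta_k} - B_{\alpha_k})\,,
\end{equation}
where $(\alpha_k, \beta_k)_{k=1}^K$ are the maximal intervals on which $s_t = -b$; a parity count gives $K = n/2$ when $a=b$ and $K = (n+1)/2$ when $a \neq b$.

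For the lower bound \eqref{eq:lowbdZconti}, I would keep only the minimal-jump term in the sum. If $a=b$, the $n=0$ term alone gives $\calZ_{1,B}^{a,b}(J) \geq \exp(bB_1)$. If $a \neq b$, the $n=1$ contribution is $e^{-2J}\int_0^1 \exp(bB_1 + (a-b)B_\tau)\,\mathrm{d}\tau$, which is at least $e^{-2J}\exp(bB_1 - 2H)$ because $|B_\tau| \leq H$ (since $B_0 = 0$). Both of these bounds imply \eqref{eq:lowbdZconti} via the elementary identity $2M + 2(H-M)_+ \geq 2H$, applied inside the exponential.

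For the upper bound \eqref{eq:uppbdZconti}, I would bound $|B_{\beta_k} - B_{\alpha_k}| \leq H$ term by term, yielding $\int_0^1 s_t \, \mathrm{d}B_t \leq bB_1 + 2KH$, and then use $\int_{0 < \tau_1 < \cdots < \tau_n < 1} \mathrm{d}\tau = 1/n!$. Summing over the admissible $n$ collapses the series to $\exp(bB_1)\cosh(e^{H-2J})$ when $a = b$ and to $\exp(bB_1 + H)\sinh(e^{H-2J})$ when $a \neq b$. Combining $\cosh(x) \leq e^x$, $\sinh(x) \leq x e^x$ (for $x \geq 0$), $e^{H-2J} \leq e^{2(H-J)}$ (since $H \geq 0$), and the same shift $2H \leq 2M + 2(H-M)_+$ rearranges these into \eqref{eq:uppbdZconti}; the remainder $e^{2(H-J)}$ in the exponent absorbs the contribution of the $n \geq 2$ paths, i.e.\ of more than one spin change in the unit block.

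I do not expect any deep obstacle: once the Poisson sum is expanded the argument is deterministic and elementary. The only real technical care is cosmetic, namely matching the precise shape of \eqref{eq:lowbdZconti} and \eqref{eq:uppbdZconti}, in which $H$ is allowed to appear only through $(H-M)_+$ and $e^{2(H-J)}$. This shape is dictated by the downstream use of the lemma, where $M$ will be tuned to the Gaussian tail of $H$ so that $(H-M)_+$ vanishes outside a rare event and $e^{2(H-J)}$ is typically negligible; the two inequalities above are designed precisely to preserve this freedom in the choice of $M$.
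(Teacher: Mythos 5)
Your proof is correct and follows essentially the same route as the paper: expand the Poisson sum, lower-bound by the minimal-jump trajectories, and upper-bound by controlling the energy deviation from $bB_1$ in terms of $H$ and summing the series. The only difference is cosmetic—the paper uses the cruder estimate $\left|\int_0^1 s_t\,\mathrm{d}B_t - bB_1\right|\le 2jH$ with $j$ the number of jumps, while you track the $K=\lceil j/2\rceil$ wrong-sign intervals and obtain $\cosh(e^{H-2J})$ and $\sinh(e^{H-2J})$ before weakening to the common form—and the final inequalities coincide.
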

%Observe that as $J\to\infty$ we have $\varepsilon(J)=O(e^{-2J})$. 

\begin{proof} The proof is based on the following observation: for any (cadlag) trajectory $(s_t)_{0\le t \le 1} \in \{-1, 1\}^{[0,1]}$ satisfying $s_0=a$ and $s_{1}=b$, denoting by $j$ its number of jumps on $[0, 1]$, we have
\begin{equation}\label{eq:boundcontiFobs}
\left|\int_0^{1} s_t  \mathrm{d} B_t - b B_{1} \right|\le 2 j H\, .
\end{equation}
We first handle the lower bound. If $a=b$ (respectively, if $a\neq b$), we lower bound the partition function $\calZ_{1, B}^{a, b}(J)$ by restricting the expectation $\mathbf{E}_J^a$ to the set of trajectories with no jump in $[0, 1]$ (respectively  with exactly one jump in $[0, 1]$).
Recalling the factor in front of the expectation in \eqref{eq:defZconti}, we obtain: 
 \begin{equation}
  \calZ_{1, B}^{a, b}(J)
  \ge \exp(- 2(J+H)\1{a\neq b}+b B_{1}) \, .
 \end{equation}
Using $H\leq M+(H-M)_+$ we derive the desired lower bound.

We now turn to the proof of the upper bound. Using the observation \eqref{eq:boundcontiFobs}, we have:
\begin{equation}\label{eq:uppbdcontiforinterm}
   \calZ_{1, B}^{a, b}(J)
   \leq \sum_{j} \frac{(e^{-2J})^j}{j!}\exp({bB_1+ 2jH}) \, ,
\end{equation}
where the sum is over non-negative even integers if $a=b$ and over positive odd integers if $a\neq b$. In the first case, let us bound the sum by the sum over all non-negative integers, in the second case let us bound by the sum over all positive integers, and use $j! \ge (j-1)!$. We obtain :
\begin{equation}\label{eq:uppbdcontiinterm}
  \calZ_{1, B}^{a, b}(J)
   \leq \exp \left(b B_{1}-2(J-H)\1{a\neq b} + e^{2(H-J)}\right) \, .
 \end{equation}
Using $H\leq M+(H-M)_+$ we see that the upper bound in the lemma follows.
%where we used that if $a\ne b$ (respectively if $a=b$), then $j$ cannot be equal to 0 (respectively to 1). We note that $\1{a=b}+\1{a\neq b}  e^{-2J} e^{2M}= e^{-2(J-M)\1{a\neq b}}$ and, using $j!\ge (j-2)!$, that
%\begin{equation}
%\sum_{j\ge 2} \frac{(e^{-2J})^j}{j!}e^{2jM}\le e^{-4(J-M)} \exp(e^{-2(J-M)}) \le e^{-2(J-M)\1{a\neq b}} \varepsilon(J-M)\,.
%\end{equation}
%Using that for all real number $x$, we have $1+x\le \exp(x)$, \eqref{eq:uppbdcontiinterm} yields:
%\begin{equation}
%\calZ_{1, B}^{a, b}(J)
%\leq \exp(bB_1- 2(J-M)\1{a\neq b}+\varepsilon(J-M))\, ,
%\end{equation}
%so the upper bound holds in this case.
%If instead $H>M$, we make the upper bound
%\begin{align*}
%  \calZ_{1, B}^{a, b}(J)
%  & \leq \exp(bB_1) \times \sum_{j\ge 0} \frac{(e^{-2J})^j}{j!} e^{2jH}\\
%  & = \exp(bB_1+e^{-2J+2H})\, ,
% \end{align*}
%and we see that the upper bound holds also in this case.
\end{proof}

From Lemma \ref{th:lemcompZconti}, we derive the following lemma concerning the free energy densities.
 \begin{lem}\label{th:lemcompFconti}
 For every $M\ge 0$ and $J \ge 0$, we have:
 \begin{equation}
 \calF(J)\geq F_{\calN_1}(J+M)-\mathbb{E}[2(H-M)_+]. 
\end{equation}
and%, if $J-M\ge 0$,
\begin{equation}
 \calF(J) \leq F_{\calN_1}(J-M) + e^{2(M-J)} +\mathbb{E}\left[2(H-M)_+ +e^{2(H-J)}\right].
 \end{equation}
 \end{lem}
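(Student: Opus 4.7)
The plan is to apply Lemma~\ref{th:lemcompZconti} blockwise to the product decomposition \eqref{eq-rewriteAregroupconti} and then pass to the thermodynamic limit $N\to\infty$. For each $n\ge 1$, set $h_n:=B_n-B_{n-1}$ and $H_n:=\max_{n-1\le s\le t\le n}|B_t-B_s|$. By stationarity and independence of Brownian increments, $(h_n)$ is i.i.d.~with law $\calN_1$, $(H_n)$ is i.i.d.~with the law of $H$, and the pair $(h_n,H_n)$ depends only on $B$ restricted to the $n$-th unit block. Applying Lemma~\ref{th:lemcompZconti} to the shifted process $\Theta^{n-1}B$ on $[n-1,n]$ yields two-sided pointwise bounds on each factor $\calZ_{n-1,n,B}^{\sigma_{n-1},\sigma_n}(J)$.

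The decisive observation is that the error terms $2(H_n-M)_+$ and $e^{2(H_n-J)}$ appearing in Lemma~\ref{th:lemcompZconti} do not depend on $\sigma_{n-1},\sigma_n$. Hence, after taking the product over $n$ in \eqref{eq-rewriteAregroupconti} and summing over configurations $\sigma\in\{-1,+1\}^{\{0,\dots,N\}}$ with fixed endpoints $a,b$, those terms factor out of the $\sigma$-sum, while the remaining $\sigma$-sum reassembles exactly into a discrete partition function with effective interaction $J\pm M$. Concretely, one obtains $\bbP$-almost surely the lower bound
$$\calZ^{a,b}_{N,B}(J)\ge Z^{a,b}_{N,h}(J+M)\,\exp\!\Bigl(-2\sum_{n=1}^N(H_n-M)_+\Bigr),$$
and the matching upper bound in which $J+M$ is replaced by $J-M$ and an additional factor $\exp\!\bigl(\sum_{n=1}^N e^{2(H_n-J)}\bigr)$ is inserted.

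It then remains to take $\tfrac1N\log$ of each inequality and let $N\to\infty$. On the left, \eqref{eq:convFconti} produces $\calF(J)$; on the right, \eqref{eq:convFdisc} applied to the Gaussian disorder sequence $(h_n)$ produces $F_{\calN_1}(J\pm M)$; and the error sums, being empirical averages of i.i.d.\ variables, converge by the law of large numbers to $\bbE[(H-M)_+]$ and $\bbE[e^{2(H-J)}]$, which are finite because the Brownian increment maximum $H$ has Gaussian tails (so all exponential moments of $H$ are finite). No serious obstacle arises; the only piece of bookkeeping is the appearance of the stray term $e^{2(M-J)}$ in the stated upper bound, which is produced by a preliminary splitting of the form $e^{2(H-J)}\le e^{2(M-J)}+e^{2(H-J)}\mathbf{1}_{\{H>M\}}$ used to isolate the ``typical'' contribution from the tail contribution retained inside the expectation.
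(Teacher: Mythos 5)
Your proposal is correct and follows essentially the same route as the paper: substitute the per-block bounds of Lemma~\ref{th:lemcompZconti} into each factor of~\eqref{eq-rewriteAregroupconti}, note that the error terms $2(H_n-M)_+$ and $e^{2(H_n-J)}$ are independent of $\sigma$ so they pull out of the $\sigma$-sum, leaving exactly $Z_{N,h}^{a,b}(J\pm M)$, then take $\tfrac1N\log$, send $N\to\infty$, and invoke \eqref{eq:convFconti}, \eqref{eq:convFdisc} and the law of large numbers. One small remark on your bookkeeping comment: the direct computation already gives the sharper estimate $\calF(J)\le F_{\calN_1}(J-M)+\bbE\big[2(H-M)_++e^{2(H-J)}\big]$, so the additional $e^{2(M-J)}$ in the stated upper bound is harmless slack, and the splitting $e^{2(H-J)}\le e^{2(M-J)}+e^{2(H-J)}\1{H>M}$ you invoke is not actually required to recover the statement (it is the step one would use to put the expectation in the $\{H>M\}$-restricted form that appears in Corollary~\ref{th:corcompFconti}).
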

 
Replacing $J$ by $J-M$ (respectively $J+M$), Lemma \ref{th:lemcompFconti} immediately gives the following corollary.
 
\begin{cor}\label{th:corcompFconti}
 For every $M\ge 0$ and $J\ge 0$, we have:
 \begin{equation}
  F_{\calN_1}(J)\le \calF(J-M)+\mathbb{E}[2(H-M)_+], 
\end{equation}
%if $J-M\ge 0$, 
and
\begin{equation}
 F_{\calN_1}(J) \geq \calF(J+M)-e^{-2J}-\mathbb{E}\left[\left(2(H-M)+e^{2(H-M-J)}\right)\1{H>M}\right].
 \end{equation}
 \end{cor}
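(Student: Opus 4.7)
The plan, as flagged by the remark immediately preceding the statement, is to derive both inequalities directly from Lemma \ref{th:lemcompFconti} by shifting the interaction parameter. No new probabilistic input beyond that lemma should be required; the whole argument is a substitution followed by some light bookkeeping of the error terms.

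First I would handle the upper bound on $F_{\calN_1}(J)$. Applying the lower bound of Lemma \ref{th:lemcompFconti} with $J-M$ in place of $J$ (which is legitimate as soon as $J\ge M$; otherwise the stated inequality can be recovered trivially from monotonicity and the fact that $F_{\calN_1}(0) = \log 2 + \tfrac12 \bbE[|h_1|]$ is finite) gives
\begin{equation*}
\calF(J-M) \ge F_{\calN_1}(J) - \bbE[2(H-M)_+],
\end{equation*}
which rearranges into the first inequality of the corollary.

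For the lower bound on $F_{\calN_1}(J)$ I would apply the upper bound of Lemma \ref{th:lemcompFconti} with $J+M$ in place of $J$. The $e^{2(M-J)}$ term becomes $e^{-2J}$, and the exponential inside the expectation becomes $e^{2(H-J-M)}$, so one gets
\begin{equation*}
\calF(J+M) \le F_{\calN_1}(J) + e^{-2J} + \bbE\bigl[2(H-M)_+ + e^{2(H-J-M)}\bigr].
\end{equation*}
Rearranging yields the statement up to the indicator $\1{H>M}$. To insert this indicator, I would split the expectation of $e^{2(H-J-M)}$ over $\{H\le M\}$ and $\{H>M\}$: on the former event the integrand is bounded by $e^{-2J}$, so its contribution is absorbed into the $e^{-2J}$ term (tightening Lemma \ref{th:lemcompFconti} by the same splitting $e^{2(H-J)} \le e^{2(M-J)} + e^{2(H-J)}\1{H>M}$ used to produce the $e^{2(M-J)}$ prefactor in the first place keeps the constant honestly equal to $1$). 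Since $(H-M)_+ = (H-M)\1{H>M}$, the two remaining terms combine into the single expectation $\bbE\bigl[(2(H-M) + e^{2(H-M-J)})\1{H>M}\bigr]$ appearing in the statement.

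The only delicate point is the constant-tracking in this last splitting — in particular matching $e^{-2J}$ rather than $2e^{-2J}$ — but this is purely a manipulation of the upper-bound inequality from Lemma \ref{th:lemcompFconti} and does not involve any new estimate on $H$ or on the Poisson process. Once the indicator is inserted correctly, the proof is complete.
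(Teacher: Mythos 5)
Your proof is correct and follows the paper's route exactly: substitute $J\mapsto J\mp M$ into Lemma \ref{th:lemcompFconti}, and your observation that matching $e^{-2J}$ (rather than $2e^{-2J}$) requires going back to the splitting $e^{2(H-J)}\le e^{2(M-J)}+e^{2(H-J)}\1{H>M}$ at the level of Lemma \ref{th:lemcompZconti} fills a small gap that the paper elides by invoking the lemma directly. The only slip is incidental and in a side remark: $F_{\calN_1}(0)=\bbE[\log 2\cosh(h_1)]$, not $\log 2+\tfrac12\bbE[|h_1|]$, but the degenerate $J<M$ case is anyway handled by the monotonicity of $\calF$, so nothing in the argument breaks.
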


\begin{proof}[Proof of Lemma \ref{th:lemcompFconti}] Recall the observation that, under $\bbP$, sequence $h=(B_{n}-B_{n-1})_{n\in\bbN}$ is i.i.d. with law $\calN_1$. 
The lower bound is obtained using \eqref{eq:lowbdZconti} in \eqref{eq-rewriteAregroupconti}, taking the $\log$, dividing by $N$ and finally sending $N$ to infinity and using \eqref{eq:convFconti} and the law of large numbers.
The upper bound is obtained similarly, using \eqref{eq:uppbdZconti} instead of \eqref{eq:lowbdZconti}.
\end{proof}

%\begin{proof}[Proof of Corollary \ref{th:corcompFconti}]
%For the upper bound (respectively the lower bound), replace  $J$ by $J-M$ (respectively by $J+M$) in Lemma \ref{th:lemcompFconti}.
%\end{proof}

We are now ready to prove Theorem \ref{th:mainNv} when $\vartheta=1$: it is just a matter of making a choice for $M$, depending on $J$, and controlling the error terms appearing in Corollary \ref{th:corcompFconti}.

\begin{proof}[Proof of Theorem \ref{th:mainNv} when $\vartheta=1$]
We are going to use the following fact:% (see \cite{cf:CR81}): 
\begin{equation}\label{eq:contrH}
\bbP[H\ge \lambda] = O\left(\exp\left(-\frac{\lambda^2}{8}\right)\right)\, , \qquad \text{as } \lambda \to \infty \, .
\end{equation}
This can be shown by noticing that $H\ge \lambda$ implies that $\max_{[0, 1]} B_\cdot \ge \frac{\lambda}2$ or $\min_{[0, 1]} B_\cdot \le -\frac{\lambda}2$ and by using the fact that $\max_{[0, 1]} B_\cdot $ and $-\min_{[0, 1]} B_\cdot$ are distributed as the absolute values of standard Gaussian variables (by the reflection principle).
Equation \eqref{eq:contrH} yields in particular that $\bbE\left[e^{2H}\right]$ and $\bbE[H^2]$ are finite.

For every $J \ge 1$, let us set $M_J := 6 (\log J)^\frac12$. Then we have $\bbE\left[e^{2(H-M_J-J)}\right] = O(e^{-2J})$,
%\begin{equation}
%\bbE\left[e^{2(H-M_J-J)}\right] = O(e^{-2J}) \,  ,
%\end{equation}
and, using Cauchy-Schwarz inequality,
\begin{equation}
\bbE\left[(H-M_J)_+]\le \bbE[H \1{H>M_J}\right]\le \bbE[H^2]^\frac12  \bbP[H>M_J]^\frac12= O(J^{-2}) \, .
\end{equation}
Thus, Corollary \ref{th:corcompFconti} yields:
\begin{equation}
\calF(J+M_J)+O(J^{-2})
\le F_{\calN_1}(J) 
\le \calF(J-M_J)+O(J^{-2})\, .
\end{equation}
Recalling \eqref{eq:devFconti}, this concludes the proof of Theorem \ref{th:mainNv} in the case where $\vartheta=1$.
\end{proof}

\begin{rem}\label{rem:theta}
To conclude the present section, we present a way to adapt it to prove Theorem \ref{th:mainNv} for any $\vartheta\in(0, \infty)$.
It consists in replacing the Browian trajectory $B$ by $\vartheta B$. The variable $H$ is thus replaced by $\vartheta H$. In Lemma \ref{th:lemcompFconti} and in Corollary \ref{th:corcompFconti}, $\calF$ is replaced by $\calF_\vartheta$, the free energy density of the continuum RFIC with external field $\vartheta B$ (instead of $B$, see \eqref{eq:defZconti} and \eqref{eq:convFconti}). Using Brownian scaling and Poisson scaling, we see that for all $J\in\bbR$:
\begin{equation}
\calF_\vartheta ( J) =\vartheta^2 \calF(J+\log \vartheta).
\end{equation}
The proof of Theorem 1 can then be conducted \emph{mutatis mutandis}.
%Alternatively, we could have The first one consits in modifying \eqref{eq-rewriteAregroupconti} by considering the continuum partition function on $[0, N\vartheta^2]$ instead of $[0, N]$ and dividing this segment in blocks of length $\vartheta^2$, so $[(n-1)\vartheta^2, n\vartheta^2], n=1, \dots, N$. Lemmas \ref{th:lemcompZconti} and \ref{th:lemcompFconti} as well as Corollary \ref{th:corcompFconti} should then be rewritten for this new block length : 
%\begin{itemize}
%\item The random variable $H$ is replaced by $H_\vartheta:=\max_{0\le s\le t\le \vartheta^2} | B_s-B_t|$.
%\item In Lemma \ref{th:lemcompFconti} and Corollary \ref{th:corcompFconti}, $\calF$ gets multiplied by $\vartheta^2$.
%\item In Lemmas \ref{th:lemcompZconti} and \ref{th:lemcompFconti}, in the right-hand sides of the inequalities, $J$ is changed in $J- \log \vartheta$ : this comes from the fact that the Poisson parameter of a block of length $\vartheta^2$ is $\vartheta^2 e^{-2J}$. Consequently, in Corollary \ref{th:corcompFconti}, $J$ is replaced by $J+\log \vartheta$ in the right-hand sides of the inequalities.
%\end{itemize}
%In fact, since $H_\vartheta$ and $\vartheta H$ have the same law, both approaches yield the same generalizations of Lemma \ref{th:lemcompFconti} and Corollary \ref{th:corcompFconti}.

\end{rem}

\section{Gaussian approximation of the discrete RFIC partition function}

Our aim is to compare the free energy density of the RFIC with disorder distributions $\mu$ and $\calN_{\vartheta^2}$, in order to derive Theorems \ref{th:mainL} and \ref{th:mainLpoly} from Theorem \ref{th:mainNv}. We first state a lemma which bounds the difference between the free energy densities for different disorder laws, with respect to their Wasserstein 1-distance. 

\begin{lem}\label{lem-W1}
	Consider $\nu$ and $\nu'$, two laws on $\mathbb{R}$. Then, for all $J\ge 0$,
	\[|F_{\nu}(J) - F_{\nu'}(J)| \leq W_1(\nu  , \nu' ),\]
	where $W_1(\cdot, \cdot)$ is the Wasserstein 1-distance, i.e.:
	\[W_1(\nu , \nu' )= \inf_{(h, h')} \mathbb{E}[|h-h'|].\]
with the infimum running over all couplings $(h,h')$ of $\nu$ and $\nu'$.
\end{lem}

\begin{proof}
	Let us consider a coupling of $\nu$ and $\nu'$ and two sequences of random variables $h=(h_n)_{n\in\bbN}$ and $h'=(h_n')_{n\in\bbN}$ such that the sequence $((h_n, h_n'))_{n\in\bbN}$ is i.i.d.~and such that $(h_1, h_1')$ is distributed according to the considered coupling.
	Then, using $\sigma_n h_n \leq \sigma_n h_n'+|h_n-h_n'|$, we have:
	\[Z_{N,h}(J) \leq Z_{N,h'}(J) \exp\left(\sum_{n=1}^N |h_n-h_n'|\right).\]
	Taking the $\log$, dividing by $N$ and sending $N$ to infinity, \eqref{eq:convFdisc} yields that:
	\[F_{\nu}(J) \leq F_{\nu'}(J) + \mathbb{E}[|h_1-h_1'|],\]
	using the law of large numbers for the last term. The lemma follows via a minimization over the couplings of $\nu$ and $\nu'$, and by exchanging the roles of $\nu$ and $\nu'$.
\end{proof}

Since this lemma provides a bound which is uniform in $J$, it seems useless for our purpose. It will however become crucial once we have introduced the main idea of the present section, which is to make a \emph{coarse graining of the Ising chain}. This coarse-graining bears some similarities with the procedure used in the previous section to link the continuum RFIC to the Gaussian disorder discrete RFIC. It is also reminiscent of the block spin transform in the context of the renormalization group approach. We pick $L\in\bbN$. For every $N\in \bbN$, we ``rigidify'' the RFIC on $\{1, \dots, NL\}$ by restricting it to configurations which exhibit spin changes only at locations $nL, n=1, \dots, N$. Then the collection $(\sigma_{nL})_{1\le n\le L}$ is itself distributed as a RFIC on $\{1, .\dots, N\}$ with disorder sequence $h^L= (h_n^{L})_{n\in\bbN}$ defined by $h_n^{L}:=\sum_{i=(n-1)L+1}^{nL} h_i$. We finally observe that this sequence is i.i.d., we denote its law by $\mu^{\ast L}$, it is the law of the sum of $L$ independent variables with law $\mu$.
This coarse-graining approach will yield comparison estimates on the free energy densities $F_\mu$ and $F_{\mu^{\ast L}}$, see Lemma \ref{th:lemcompFdiscrete}. The lower bound is straight-forward to obtain, while the upper bound will require some work.  

With these estimates at hand, the proofs of Theorems \ref{th:mainL} and \ref{th:mainLpoly} will informally consist in the following approximations. We will let $L$ depend on $J$ and show, as $J\to\infty$ that:
\begin{equation}\label{eq:sketch23}
	F_{\calL}(J) 
	\approx \frac1L F_{\calL^{\ast L}}(J)\\
	\approx \frac1L F_{\calN_{L\vartheta^2}}(J)
	\approx F_{\calN_{\vartheta^2}}(J)
	\approx \frac{\vartheta^2}{2J} \, .
\end{equation}
The first and third approximations will be given by our coarse-graining estimates, while for the fourth one we will use Theorem \ref{th:mainNv}. The second one will be provided by Lemma \ref{lem-W1} together with the following lemma, which is a version of the central limit theorem with respect to the Wasserstein-1 distance. 
%Recall that we denote by $\calL^{\ast L}$ the law of the sum of $L$ independants variables with law $\calL$. 
%Recall that if $h_1, \dots, h_{L}$ are $L$ independent variables following law $\calL$, we denote by $\calL^{\ast L}$ the law of the sum $h_1+\dots+h_L$. 
%Recall also that the centered Gaussian laws satisfy the following relation: $\left(\calN_{\vartheta^2}\right)^{\ast L}=\calN_{L\vartheta^2}$.

\begin{lem}\label{lem-CLT} If $\calL$ has a finite third moment, then the sequence $\left(W_1\left(\mu^{\ast L}, \mathcal{N}_{L\vartheta^2}\right)\right)_{L\in\bbN^*}$ is bounded. 
If $\calL$ has a finite $p$-th moment, with $p\in [2, 3)$, then, as $L\to \infty$,
\begin{equation}
W_1\left(\mu^{\ast L}, \mathcal{N}_{L\vartheta^2}\right) = O\left(L^{\frac{3-p}2}\right)\, .
\end{equation}
%Assuming that $\mu$ has variance $\vartheta^2$, we have:
%\[W_1\left(\mu}^{\ast L}, \mathcal{N}_{L\vartheta^2}\right)=O(1),\quad \text{as } L \text{ goes to infinity}.\]

\end{lem}

\begin{proof}
This is equivalent to showing that the Wasserstein 1-distance between the law of $(h_1+\dots +h_n)/\sqrt{L}$ and $\calN_{\vartheta^2}$ is $O(L^{-\frac12})$, respectively $O(L^{1-\frac{p}2})$.
This is a known result, see \cite{cf:R09} for example.
% Theorem 5.3.1 \cite{cf:IL71} for a proof under the requirement that $\calL$ has a third moment.
\end{proof}

Lemmas \ref{lem-W1} and \ref{lem-CLT} yield that the error in the second approximation in \eqref{eq:sketch23} is $O(\frac{1}{L})$, respectively $O(\frac1{L^{\frac{p-1}{2}}})$. How large an $L$ we will take will depend on the error terms coming from the coarse-graining estimates, which we present now.

%[DELETE OR REWRITE THIS PARAGRAPH?] We expect that, as $J$ goes to infinity, the jumps are far apart, and therefore the potential can be thought of as being more and more like a Brownian motion, so somehow as if it had Gaussian increments $\calN_{\vartheta^2}$. We are going to show that the free energies of the discrete model with law $\calL$ and the discrete model with Gaussian environment $\calN_v$ are asymptotically equivalent. 

\subsection{Coarse-graining of the Ising chain}

As annouced, in the present section, we pick $L\in\bbN$ and, denoting by $\calL^{\ast L}$ the law of $h_1+\dots+h_L$ (recall that the $h_n$'s are i.i.d with law $\calL$), we will compare the free energy density $F_{\mu}$ with the free energy density  $F_{\calL^{\ast L}}$ by performing a ``coarse-graining'' of the Ising chain.

We need to slightly generalize the notation in \eqref{eq:defZN}. We introduce for positive integers $\ell$ and $\err$ satisfying $\ell\le \err$ the partition function on $\{\ell,\ell+1, \dots,  \err\}$:
\begin{equation}\label{eq-defZlr}
Z_{\ell, \err , h}^{a,b}(J):=
\sum_{\substack{\sigma\in \{-1, 1\}^{\{\ell-1, \ell, \dots, \err\}}\\ \text{s.t. } \sigma_{\ell-1}=a, \,  \sigma_\err=b} }
\exp\Big(-2J  \sum_{n=\ell}^{\err} \1{\sigma_n\neq \sigma_{n-1}}+\sum_{n=\ell}^{\err} \sigma_n h_n\Big)\, .
\end{equation}
Note that $Z_{N , h}^{a,b}(J)=Z_{1, N , h}^{a,b}(J)$ and that $Z_{\ell, \err , h}^{a,b}(J)= Z_{\err-\ell +1, \Theta^{\ell-1}h}^{a,b}(J)$ if we define $\Theta^{\ell-1}h$ as the sequence $(h_{\ell-1 + n}-h_{\ell-1})_{n\in\bbN}$.

The coarse-graining idea goes as follows: we define a sequence $h^L=(h^L_n)_{n\in\bbN}$ by setting $h^L_n:=\sum_{i=(n-1)L+1}^{nL} h_i$. As already stressed, the sequence $h^L$ is i.i.d. with law $\calL^{\ast L}$. For all $N\in\bbN$, we associate to any configuration $\sigma \in \{-1, 1\}^{\{0, 1, \dots, NL\}}$ the configuration $\sigma^L \in \{-1, 1\}^{\{0, 1, \dots, N\}}$ defined by:
\begin{equation}
\sigma^L_n = \sigma_{nL}, \qquad n=0, \dots, N.
\end{equation}
Multiple configurations $\sigma$ are sent to the same configuration $\sigma^L$, we rewrite the summation in \eqref{eq:defZN} according to this partitioning and we obtain:
\begin{equation}\label{eq:rewriteZNL}
Z_{NL, h}^{a, b}(J)=
\sum_{\substack{\sigma\in \{-1, 1\}^{\{0, 1, \dots, N\}}\\ \text{s.t. } \sigma_0=a,\,  \sigma_N=b} }
\prod_{n=1}^N Z_{(n-1)L+1, nL, h}^{\sigma_{n-1}, \sigma_n}(J) \, .
\end{equation}
Our aim is to compare this expression with
\begin{equation}
Z_{N, h^L}^{a, b}(J)
=\sum_{\substack{\sigma\in \{-1, 1\}^{\{0, 1, \dots, N\}}\\ \text{s.t. } \sigma_0=a,\,  \sigma_N=b} }
\prod_{n=1}^N \exp(-2J \1{\sigma_{n-1} \neq  \sigma_n}+\sigma_n h_n^L)\, ,
\end{equation}
so naturally we now want to compare the partition function on the \emph{block} $\{(n-1)L+1,\dots ,  nL\}$, i.e $Z_{(n-1)L+1, nL, h}^{\sigma_{n-1}, \sigma_n}(J)$, with $\exp(-2J \1{\sigma_{n-1} \neq  \sigma_n}+\sigma_n h_n^L)$. This will be provided by the following lemma, that without loss of generality we formulate only for the first block $\{1, \dots, L\}$. 
We introduce the random variable
\begin{equation}\label{eq:defHL}
H_L:=\max_{1\leq n \leq m \leq L} \Big| \sum_{i=n}^m h_i \Big| \, .
\end{equation}

\begin{lem}\label{th:lemcompZdiscrete}  
For every realization of $h$, for every $(a, b)\in\{-1, +1\}^2$, for every $L\in\N$, $J\ge 0$ and $M \ge 0$, we have
\begin{equation}\label{eq:lemcompZdiscretelowbd}
 Z_{L, h}^{a, b}(J) \ge \exp\left(-2J \1{a \neq  b}+b h_1^L\right)\, ,
\end{equation}
and%, if $J-M-\frac12 \log L\ge 0$,
\begin{equation}\label{eq:lemcompZdiscreteuppbd}
\begin{aligned}
Z_{L, h}^{a, b}(J) 
& \le \exp\Bigg[-2\left(J\!-\!M\!-\!\frac12\! \log\! L\right) \1{a \neq  b}+b h_1^L\\
& \qquad \qquad  +  L\left(4(H_L-M)_+ +  e^{2(M-J)}\right)\Bigg]\, .
\end{aligned}
\end{equation}
\end{lem}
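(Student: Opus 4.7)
The strategy is to mirror the proof of Lemma~\ref{th:lemcompZconti} in the continuum case, replacing the Brownian integral $\int_0^1 s_t\, \mathrm{d}B_t$ by the discrete energy $\sum_{n=1}^L \sigma_n h_n$ and the bound \eqref{eq:boundcontiFobs} by its discrete counterpart. The lower bound \eqref{eq:lemcompZdiscretelowbd} is immediate by restriction to a single configuration: for $a = b$, take $\sigma \equiv a$ (no flips, energy $\sum_n \sigma_n h_n = b h_1^L$); for $a \neq b$, take $\sigma_0 = a$ and $\sigma_n = b$ for $n \geq 1$ (one flip at position $1$, energy $-2J + b h_1^L$).

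The upper bound rests on the following combinatorial observation. For any $\sigma$ with $\sigma_0 = a$, $\sigma_L = b$ and $j$ spin flips, the set $\{1, \ldots, L\}$ is partitioned into $j + 1$ maximal domains of constant spin, of which exactly $\lceil j/2 \rceil$ carry a spin opposite to $b$. Writing $\sum_{n=1}^L \sigma_n h_n - b h_1^L = -2b \sum_{D_k : \sigma|_{D_k} \neq b} \sum_{n \in D_k} h_n$ and bounding each inner sum by $H_L$ in absolute value gives
\[
\left| \sum_{n=1}^L \sigma_n h_n - b h_1^L \right| \leq 2 \lceil j/2 \rceil H_L \leq 2 j H_L \quad \text{for } j \geq 1,
\]
the case $j = 0$ being trivial. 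Combined with $H_L \leq M + (H_L - M)_+$ and $j \leq L$, this yields $2 j H_L \leq 2 j M + 2 L (H_L - M)_+$ uniformly in $j$, so that, grouping configurations according to their number of flips,
\[
Z_{L, h}^{a, b}(J) \leq e^{b h_1^L + 2L(H_L - M)_+} \sum_{j} \binom{L}{j} e^{j(2M - 2J)},
\]
where the sum runs over even $j \geq 0$ if $a = b$ and over odd $j \geq 1$ if $a \neq b$.

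Setting $y := e^{2(M - J)}$, the even sum equals $\tfrac12[(1 + y)^L + (1 - y)^L] \leq (1 + y)^L \leq e^{Ly}$, while the odd sum equals $\tfrac12[(1 + y)^L - (1 - y)^L] \leq L y (1 + y)^{L - 1} \leq L y \cdot e^{Ly}$, by integrating $\tfrac{\mathrm{d}}{\mathrm{d}t}(1 + t)^L$ over $[-y, y]$. Since $L y = L e^{2(M - J)}$ and $\log y = 2(M - J)$, and using $2L \leq 4L$, both outcomes immediately imply \eqref{eq:lemcompZdiscreteuppbd}: the extra prefactor $Ly$ in the $a \neq b$ case contributes precisely $\log L + 2(M - J) = -2(J - M - \tfrac12 \log L)$ to the exponent, matching the indicator term. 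The main (minor) obstacle will be the bookkeeping: aligning the two parity cases and verifying that the slight losses one naturally incurs (namely $2L$ rather than $4L$ on the $(H_L - M)_+$ term, and $L - 1$ rather than $L$ on the exponential) are safely absorbed by the stated constants. Conceptually, the $\tfrac12 \log L$ correction in the $a \neq b$ case is the discrete analog of the factor $L \cdot e^{-2J}$ that would arise as the ``Poisson rate times block length'' if $L$ continuum blocks were concatenated.
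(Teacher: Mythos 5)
Your proof is correct and follows essentially the same route as the paper: both lower bound by the single configuration $(a,b,\dots,b)$, and both prove the upper bound by grouping configurations according to the number $j$ of spin flips, bounding the energy relative to $b h_1^L$ by $2jH_L$, counting at most $\binom{L}{j}$ such configurations, and summing. The only cosmetic differences are in the bookkeeping: you substitute $H_L \le M + (H_L-M)_+$ and $j\le L$ \emph{before} summing and then use the exact parity identities $\tfrac12[(1+y)^L \pm (1-y)^L]$, whereas the paper sums first (relaxing the parity constraint and using $\binom{L}{j}\le L\binom{L}{j-1}$ to extract the $L$ factor) and applies the $H_L\le M+(H_L-M)_+$ splitting afterwards via the factorization $1+e^{2(H_L-J)}\le e^{2(H_L-M)_+}(1+e^{2(M-J)})$; your version in fact yields the marginally sharper constant $2L(H_L-M)_+$ in place of $4L(H_L-M)_+$.
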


\begin{proof}
For the lower bound, we simply lower bound the sum defining $Z_{L, h}^{a, b}(J)$ in \eqref{eq:defZconti} by the term corresponding to $\sigma=(a, b, b, \dots, b)$.

We now turn to the upper bound. For given $\sigma\in\{-1, +1\}^{\{0, 1, \dots, L\}}$ such that $\sigma_0=a, \sigma_L=b$, denoting $j=\sum_{n=1}^L \1{\sigma_n\neq \sigma_{n-1}}$ the number of spin flips in $\sigma$, we observe that:
\begin{equation}
\sum_{n=1}^L \sigma_n h_n \leq b h^L_1 + 2 j H_L
\end{equation}
Furthermore, there are at most $\binom{L}{j}$ configurations in $\{-1, +1\}^{\{0, 1, \dots, L\}}$ having $j$ spin flips, with the convention $\binom{L}{j}=0$ if $j>L$. 
Therefore:
\begin{equation}\label{eq:proofuppboundZdisc}
Z_{L, h}^{a, b}(J) 
\le \sum_{j} \binom{L}{j} \exp(-2jJ+b h_1^L + 2j H_L) \, , 
\end{equation}
where the sum is over all non-negative even integers if $a=b$, over all positive odd integers if $a\neq b$. In the first case, let us upper bound the sum by the sum over all non-negative integers and in the second case let us bound it by the sum over all positive integers and use $\binom{L}{j}\le L \binom{L}{j-1}$. Using the binomial formula, we get:%Since $J-M-\frac12 \log L\ge 0$, 
\begin{equation}\label{eq:proofuppboundZdiscrewrite}
Z_{L, h}^{a, b}(J) \le \exp\left(bh_1^L -2(J-H_L-\frac12 \log L) \1{a\neq b}\right) \times  (1+e^{2(H_L-J)})^L
\end{equation} 
We use $H_L\le M+(H_L-M)_+$ to bound $H_ L\1{a\neq b}$ and we use it again to get:
\begin{equation}
1+e^{2(H_L-J)} \le e^{2(H_L-M)_+} (1+e^{2(M-J)})
\end{equation}
Using the fact that $1+x\le e^x$ for any real number $x$, we get:
\begin{equation}
Z_{L, h}^{a, b}(J) 
\le \exp\left[-2\left(\!J\!-\!M\!-\!\frac12\! \log\! L\!\right) \1{a \neq  b}+b h_1^L
 +  2(L+1)(H_L-M)_+ + L e^{2(M-J)}\right]\, .
\end{equation}
We bound $L+1$ by $2L$ to conclude the proof of the upper bound.
\end{proof}

From Lemma \ref{th:lemcompZdiscrete}, we derive the following lemma concerning the free energy densities. 
%Recall the assumptions [on $\calL$ in the introduction].
 \begin{lem}\label{th:lemcompFdiscrete}
  %There exist positive constants $c$ and $C$ depending on $\calL$ such that 
  For every $L\in\N$, $J\ge 0$ and $M\ge 0$, we have:
\begin{equation}\label{eq:lemcompFdiscretelowbd}
 F_{\calL}(J) \geq \frac1LF_{\calL^{\ast L}}(J). 
\end{equation}
and%, if $J-M-\frac12 \log L\ge 0$,
\begin{equation}\label{eq:lemcompFdiscreteuppbd}
 F_{\calL}(J) 
 \leq \frac1L F_{\calL^{\ast L}}(J\!-\!M\!-\!\frac12\!\log L) + 4 \bbE_\mu\left[(H_L-M)_+ \right]+  e^{2(M-J)}\,.
 \end{equation}
 \end{lem}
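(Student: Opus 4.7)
The plan is to mimic the continuum argument used to prove Lemma \ref{th:lemcompFconti}, but now applied block-wise to the discrete identity \eqref{eq:rewriteZNL}, using Lemma \ref{th:lemcompZdiscrete} in place of Lemma \ref{th:lemcompZconti}.

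First, for each $n\in\bbN$ let $H_L^{(n)}:=\max_{(n-1)L+1\le j\le k\le nL}\bigl|\sum_{i=j}^k h_i\bigr|$, and observe that by the i.i.d.\ structure of $h$, the sequence $(H_L^{(n)})_{n\in\bbN}$ is i.i.d.\ with the same law as $H_L$ defined in \eqref{eq:defHL}, and $(h_n^L)_{n\in\bbN}$ is i.i.d.\ with law $\calL^{\ast L}$. For the lower bound \eqref{eq:lemcompFdiscretelowbd}, apply \eqref{eq:lemcompZdiscretelowbd} (shifted to the $n$-th block) to every factor in the right-hand side of \eqref{eq:rewriteZNL}. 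The resulting bound is exactly
\[
Z_{NL,h}^{a,b}(J)\ \ge\ \sum_{\sigma\in\{-1,1\}^{\{0,\dots,N\}}:\sigma_0=a,\sigma_N=b}\prod_{n=1}^N \exp\bigl(-2J\,\1{\sigma_{n-1}\neq \sigma_n}+\sigma_n h_n^L\bigr)\ =\ Z_{N,h^L}^{a,b}(J).
\]
Taking $\log$, dividing by $NL$, and letting $N\to\infty$ gives \eqref{eq:lemcompFdiscretelowbd} via \eqref{eq:convFdisc}.

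For the upper bound \eqref{eq:lemcompFdiscreteuppbd}, apply \eqref{eq:lemcompZdiscreteuppbd} (shifted to the $n$-th block) to every factor in \eqref{eq:rewriteZNL}. The crucial point is that each additive error term $L\bigl(4(H_L^{(n)}-M)_++e^{2(M-J)}\bigr)$ does not depend on $\sigma$, so it factors out of the sum over configurations. What is left inside the sum is precisely the summand defining $Z_{N,h^L}^{a,b}(J-M-\tfrac12\log L)$. This yields
\[
Z_{NL,h}^{a,b}(J)\ \le\ Z_{N,h^L}^{a,b}\!\bigl(J-M-\tfrac12\log L\bigr)\ \exp\!\left[\sum_{n=1}^N L\bigl(4(H_L^{(n)}-M)_++e^{2(M-J)}\bigr)\right].
\]

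Finally, take $\log$, divide by $NL$ and send $N\to\infty$. The first factor converges $\bbP_\mu$-a.s.\ to $\tfrac{1}{L} F_{\calL^{\ast L}}(J-M-\tfrac12\log L)$ by \eqref{eq:convFdisc} applied to the i.i.d.\ sequence $h^L$, and the exponential error contributes, by the law of large numbers applied to the i.i.d.\ sequence $(H_L^{(n)})_{n\in\bbN}$, precisely $4\bbE_\mu[(H_L-M)_+]+e^{2(M-J)}$. This proves \eqref{eq:lemcompFdiscreteuppbd}. No step is really an obstacle: the only thing to be careful about is that the $\sigma$-independent error factorizes cleanly, which is exactly what makes the form of the bound in Lemma \ref{th:lemcompZdiscrete} (additive in $\sigma$-free error) useful.
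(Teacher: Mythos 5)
Your proof is correct and follows exactly the approach the paper intends: substitute the per-block bounds of Lemma \ref{th:lemcompZdiscrete} into the factorization \eqref{eq:rewriteZNL}, observe that the additive error terms are $\sigma$-free so they factor out of the configuration sum, take logs, divide by $NL$, and send $N\to\infty$ using \eqref{eq:convFdisc} and the law of large numbers for the i.i.d.\ block maxima. The paper's proof is a terse two-sentence summary of precisely these steps, and your write-up just spells out the bookkeeping explicitly, with no deviation of substance.
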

 
\begin{proof}%[Proof of Lemma \ref{th:lemcompFdiscrete}]
The lower bound is immediate, using \eqref{eq:lemcompFdiscretelowbd} in \eqref{eq:rewriteZNL}, taking the $\log$, dividing by $NL$ and finally sending $N$ to infinity with the help of \eqref{eq:convFdisc}.
For the upper bound, we do the same, using \eqref{eq:lemcompFdiscreteuppbd} instead of \eqref{eq:lemcompFdiscretelowbd} and using in addition the law of large numbers. 
%We get:
%\begin{align*}
% F_{\calL}(J) 
% & \leq \frac1L F_{\calL^{\ast L}}(J\!-\!M\!-\!\frac12\!\log L) + \frac1L \varepsilon(J\!-\!M\!-\!\frac12\!\log L)\\
% & \qquad +\frac1L \bbE\left[\left(L\log 2+2\sum_{n=1}^{L} |h_n|\right)\1{H_L>M}\right]\, .
%\end{align*}
%To handle the last term, we use the linearity of the expectation.
% and we observe that for all $n=1, \dots, L$, 
%$\mathbb{E}[ |h_n|\1{H_{L}>M}]=\mathbb{E}[ |h_1|\1{H_{L}>M}]$ .
\end{proof}

Replacing $J$ by $J\!+\!M\!+\!\frac12\!\log L$, Lemma \ref{th:lemcompFdiscrete} yields the following corollary.
 
 \begin{cor}\label{th:corcompFdiscrete}
 For every $L\in\N$, $J\ge 0$ and $M \ge 0$,
 \begin{equation}
 \frac1L F_{\calL^{\ast L}}(J)
 \geq F_{\calL}(J\!+\!M\!+\!\frac12\!\log L)
 - 4 \bbE_\mu\left[(H_L-M)_+\right] -  \frac{e^{-2J}}{L}\, .
 \end{equation}
 %b_{L,M}(J\!+\!M\!+\!\frac12\!\log L)
 \end{cor}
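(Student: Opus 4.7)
The plan is entirely mechanical: the corollary is nothing more than a re-indexing of the upper bound in Lemma \ref{th:lemcompFdiscrete}, as the sentence directly preceding the statement already flags. My starting point would be inequality \eqref{eq:lemcompFdiscreteuppbd},
\[
F_{\calL}(J) \le \frac{1}{L}\, F_{\calL^{\ast L}}\!\left(J - M - \tfrac12 \log L\right) + 4\, \bbE_\mu\!\left[(H_L - M)_+\right] + e^{2(M-J)},
\]
valid for every $L\in\bbN$ and every $J, M \ge 0$.

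Next I would substitute $J$ by $J' := J + M + \tfrac12 \log L$ throughout. Since $L\ge 1$ and $M\ge 0$, we have $J' \ge J \ge 0$, so the hypotheses of Lemma \ref{th:lemcompFdiscrete} remain satisfied. Under this substitution the argument $J' - M - \tfrac12\log L$ of $F_{\calL^{\ast L}}$ collapses to $J$; the exponential error term becomes $e^{2(M - J')} = e^{-2J - \log L} = e^{-2J}/L$; and the expectation $4\bbE_\mu[(H_L - M)_+]$ is unaffected because it does not depend on $J$. Moving $\tfrac{1}{L} F_{\calL^{\ast L}}(J)$ to the left-hand side and the remaining quantity to the right-hand side reverses the inequality from $\le$ to $\ge$ and produces exactly the bound claimed in the corollary.

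I foresee no obstacle whatsoever: the statement contains no content beyond Lemma \ref{th:lemcompFdiscrete} and exists only to package the estimate in the form needed later, namely a lower bound for $\tfrac{1}{L} F_{\calL^{\ast L}}$ in terms of $F_{\calL}$ evaluated at a shifted interaction. This orientation is precisely what will be combined with Lemma \ref{lem-W1} and Lemma \ref{lem-CLT} to implement the chain of approximations in \eqref{eq:sketch23} when deriving Theorems \ref{th:mainL} and \ref{th:mainLpoly}.
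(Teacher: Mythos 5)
Your substitution is exactly the one the paper intends: setting $J' = J + M + \tfrac12\log L$ in the upper bound \eqref{eq:lemcompFdiscreteuppbd}, noting the shifted exponential term becomes $e^{-2J}/L$, and rearranging. The argument is correct and identical to the paper's one-line derivation.
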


\begin{rem}
In the proofs of Theorem \ref{th:mainL} of \ref{th:mainLpoly}, we will take $L$ depending on $J$ and large (as $J$ goes to infinity). We will also let $M$ depend on $J$, typically larger than scale $\sqrt{L}$, so as to make the event $\{H_L> M\}$ unlikely,
%or "in order to" ?
but also smaller than scale $J$ so that the bounds remain interesting for our purpose.%, see \eqref{eq:choiceLMt}.
\end{rem}

%To handle the last term, we use the linearity of the expectation, and then Cauchy-Schwarz inequality. For all $n=1, \dots, L$:
%\begin{align*}
%\mathbb{E}\Big[ |h_n|\1{H_{L}>M}\Big]
%\leq  \mathbb{E}\left[h_n^2\right]^\frac12 \bbP\Big[H_{L}>M\Big]^\frac12 = \vartheta \; \bbP\Big[H_{L}>M\Big]^\frac12 \, .
%\end{align*}
%Hence:
%\[\frac1L\bbE\left[(L\ln 2+2\sum_{n=1}^{L} |h_n|)\1{H_{L}>M}\right]\leq (\ln 2) \; \bbP\left[H_{L}>M\right] + 2\vartheta \; \bbP\left[H_{L}>M\right]^\frac12.\]
%We conclude the proof by noticing that $\bbP\left[H_{L}>M\right]\in[0, 1]$.

%\subsection{Comparison of the general case with the Gaussian case}

%[DELETE SECTION ] Recall that our aim is to establish a comparison between the free energy density of the Ising chain with disorder law $\calL$ and the free energy density corresponding to $\calN_{\vartheta^2}$, in order to derive Theorem \ref{th:mainL} from Theorem \ref{th:mainNv}. Using our coarse-graining result, we will be able to reduce this to showing the same comparison but with $\calL$ replaced by $\calL^{\ast L}$ and with $\calN_{\vartheta^2}$ replaced by $\left(\calN_{\vartheta^2}\right)^{\ast L}=\calN_{L\vartheta^2}$, with $L$ large. This comparison will be performed using CLT estimates and a lemma which we state now, that allows us to compare the free energy densities for distinct choices of the disorder law $\calL$.

\subsection{Proof of Theorem \ref{th:mainL}}

\begin{lem}\label{th:lemboundEmuexpo}
Assume that $\mu$ admits some finite exponential moments and recall that it is centered with variance denoted $\vartheta^2$. Then, there exists a positive constant $c$ such that, for all $L\in\bbN$ and all $M>0$ satisfying $M\le c L$, we have
\begin{equation}\label{eq:lemboundE+expo}
\bbE_\mu[(H_L-M)_+]  \le \frac{4 \vartheta^2 L^3}{M} \exp\left(-\frac{M^2}{4\vartheta^2 L} \right) \, .
\end{equation}
%\begin{equation}\label{eq:lemboundEmuexpo}
%\bbP_\mu[H_L>M]  \le 2 L^2 \exp\left(-\frac{M^2}{4\vartheta^2 L} \right) \, .
%\end{equation}
%and
%\begin{equation}\label{eq:lemboundEmuexpoconseq}
%\bbE_\mu[(H_L-M)_+]  \le \vartheta \sqrt{2} L^2 \exp\left(-\frac{M^2}{8\vartheta^2 L} \right) \, .
%\end{equation}
%\le (\log 2 + 2 \vartheta) \sqrt{2} L \exp\left(\frac{C t^2 L - tM}{2} \right)
\end{lem}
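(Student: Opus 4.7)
My approach is the classical Cram\'er--Chernoff tail bound for partial sums, extended to $H_L$ via a union bound, then integrated: $\bbE_\mu[(H_L-M)_+] = \int_M^\infty \bbP_\mu[H_L > s]\,\mathrm{d}s$.

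Set $S_k := h_1 + \cdots + h_k$ and $\psi(\lambda) := \log\bbE_\mu[e^{\lambda h_1}]$. The exponential moment hypothesis makes $\psi$ smooth near $0$, with $\psi(0)=\psi'(0)=0$ and $\psi''(0)=\vartheta^2$; hence there exists $\lambda_0 > 0$ with $\psi(\lambda) \leq \vartheta^2\lambda^2$ on $|\lambda|\le\lambda_0$. The exponential Markov inequality then gives, for $k \leq L$ and $\lambda \in (0, \lambda_0]$,
\[
\bbP_\mu[|S_k|>t] \leq 2\,e^{-\lambda t + L\vartheta^2\lambda^2}.
\]
Optimizing with $\lambda = t/(2L\vartheta^2)$, admissible exactly for $t \leq cL$ with $c:=2\vartheta^2\lambda_0$, yields the sub-Gaussian tail $\bbP_\mu[|S_k|>t]\le 2\,e^{-t^2/(4\vartheta^2L)}$ uniformly in $k\le L$. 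Since $H_L = \max_{1\le n\le m\le L}|S_m-S_{n-1}|$ and each increment is distributed as some $S_{m-n+1}$ with $m-n+1\le L$, a union bound over the $\binom{L+1}{2}\le L^2$ pairs then yields, for $t \leq cL$,
\[
\bbP_\mu[H_L>t]\le 2L^2\,e^{-t^2/(4\vartheta^2L)}.
\]

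Under the hypothesis $M \leq cL$, the standard estimate $\int_M^\infty e^{-s^2/a}\,\mathrm{d}s \le \frac{a}{2M}\,e^{-M^2/a}$ applied with $a = 4\vartheta^2 L$ then bounds the contribution of $s\in[M,cL]$ to $\bbE_\mu[(H_L-M)_+]$ by $\frac{4\vartheta^2 L^3}{M}\,e^{-M^2/(4\vartheta^2L)}$, exactly matching the right-hand side of \eqref{eq:lemboundE+expo}. On the complementary range $s > cL$, the optimizer $t/(2L\vartheta^2)$ exceeds $\lambda_0$; freezing instead at $\lambda = \lambda_0$ produces the \emph{linear} tail $\bbP_\mu[H_L>s]\le 2L^2\,e^{-\lambda_0 s + L\vartheta^2\lambda_0^2}$, whose integral is of order $L^2\,e^{-L\vartheta^2\lambda_0^2}/\lambda_0$.

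The main --- and really only --- obstacle is to absorb this linear-regime leftover into the clean $4\vartheta^2$ prefactor: taking the lemma's constant $c$ strictly smaller than $2\vartheta^2\lambda_0$ creates an exponential gap $e^{-L\vartheta^2\lambda_0^2}\ll e^{-M^2/(4\vartheta^2L)}$ uniformly in $M\le cL$, which easily dominates the polynomial-in-$L$ discrepancy between the two prefactors and allows the full bound \eqref{eq:lemboundE+expo} to go through.
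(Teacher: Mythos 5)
Your route is genuinely different from the paper's, and both are sound at the core, but the paper's is tighter in a way that matters for the final step. The paper does not pass through a tail bound at all: it uses the pointwise inequality $(H_L-M)_+ \le t^{-1}e^{t(H_L-M)}$, computes $\bbE_\mu[e^{tH_L}]$ by the same union bound applied to exponentials rather than to probabilities, and then optimizes $t=M/(2\vartheta^2 L)$ \emph{once}. Because there is only one optimization, the constraint $t\le\lambda_0$ translates directly into $M\le cL$, and no second regime ever appears. Your route --- Chernoff tail bound for $\bbP_\mu[H_L>s]$ followed by $\bbE_\mu[(H_L-M)_+]=\int_M^\infty\bbP_\mu[H_L>s]\,\mathrm{d}s$ --- has to optimize $\lambda=\lambda(s)$ separately at each $s$, and so inevitably runs out of sub-Gaussian range at $s=cL$. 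You correctly identify this leftover as the obstacle, but your treatment of it is loose. First, shrinking $c$ is not actually what makes it work, and the claimed ``exponential gap $e^{-L\vartheta^2\lambda_0^2}\ll e^{-M^2/(4\vartheta^2 L)}$'' is not uniform: at $M$ near $cL$ the two exponents become comparable and the hand-wave about prefactors does not close the argument. Second, the remark that it ``easily dominates the polynomial-in-$L$ discrepancy'' is precisely the point that needs a computation. The computation that does close it, with the original $c=2\vartheta^2\lambda_0$, is the following: the Gaussian-regime integral is really $\int_M^{cL}2L^2 e^{-s^2/(4\vartheta^2L)}\mathrm{d}s \le \frac{4\vartheta^2 L^3}{M}\bigl(e^{-M^2/(4\vartheta^2 L)}-e^{-\vartheta^2\lambda_0^2 L}\bigr)$, while the linear-regime integral equals $\frac{2L^2}{\lambda_0}e^{-\vartheta^2\lambda_0^2 L}$; the two remainder terms cancel precisely when $\frac{2L^2}{\lambda_0}\le\frac{4\vartheta^2 L^3}{M}$, which is exactly $M\le 2\vartheta^2\lambda_0 L=cL$. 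So your approach does yield the lemma as stated, but only after supplying this missing step, whereas the paper's one-shot optimization sidesteps the whole issue.
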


\begin{proof}
Let us denote by $\varphi: \bbR\to \bbR, t\mapsto \bbE_\mu[\exp(th_1)]$ the moment generating function of $\mu$. Developing it around 0, we see that there exists a positive constant $c'$ such that:
\begin{equation}\label{eq:devexpomom}
\forall t\in[-c', c'] \, , \quad \varphi(t)\leq \exp(\vartheta^2 t^2)\, .
\end{equation}
We pick any $t\in(0, c')$. Recalling the definition of $H_L$ in \eqref{eq:defHL}, we observe that:
\begin{equation}
\exp(t H_L) \le \sum_{1\le n \le m\le L}  \exp\left(\sum_{i=n}^m h_i \right) + \exp\left(-\sum_{i=n}^m h_i \right) 
\end{equation}
which, by taking the expectation under $\bbP_\mu$ and using \eqref{eq:devexpomom}, yields:
\begin{equation}
\bbE_\mu[\exp(t H_L)] \le \sum_{1\le n \le m\le L} \varphi(t)^{(m-n+1)}+\varphi(-t)^{(m-n+1)} \le  2 L^2\exp(\vartheta^2 t^2 L)\, .
\end{equation}
Using the fact that $(H_L-M)_+\le \frac1t  \exp(t(H_L-M))$, we obtain:
\begin{equation}\label{eq:lemboundE+expowitht}
\bbE_\mu[(H_L-M)_+ ] \le \frac{2L^2}t  \exp(\vartheta^2 t^2 L-Mt) \, .
\end{equation}
Now, we see that the lemma holds with $c:=2\vartheta^2 c'$. Indeed, when $M\le c L$, take $t=\frac{M}{2 \vartheta^2 L}$, which is not larger than $c'$, to derive \eqref{eq:lemboundE+expo} from \eqref{eq:lemboundE+expowitht}.
\end{proof}

We are now ready to prove Theorem \ref{th:mainL}.

\begin{proof}[Proof of Theorem \ref{th:mainL}]

In this proof, we use $\tM$ as a short-hand for $M+\frac12 \log L$.
Using the lower bound in Lemma \ref{th:lemcompFdiscrete} for law $\mu$, then Lemma \ref{lem-W1} and finally Corollary \ref{th:corcompFdiscrete} for law $\calN_{\vartheta^2}$, we derive a lower bound on $F_{\calL}(J)$:
\begin{equation}\label{eq:proofth2lowbound}
\begin{aligned}
 F_{\calL}(J) 
 & \geq \frac1L F_{\calL^{\ast L}}(J) \\
 & \geq \frac1L F_{\calN_{L\vartheta^2}}(J) - \frac1L W_1(\calL^{\ast L},\calN_{L\vartheta^2}) \\
 & \geq F_{\calN_{\vartheta^2}}(J+\tM)- 4 \bbE_{\calN_{\vartheta^2}}\left[(H_L-M)_+\right] -  \frac{e^{-2J}}{L}- \frac1L W_1(\calL^{\ast L},\calN_{L\vartheta^2}) \, .
\end{aligned}
\end{equation}
Recall Theorem \ref{th:mainNv}. Using the fact that $\frac1{1+x}\ge 1-x $ for all $x\in\bbR^+$, we have 
\begin{equation}\label{eq:proofth2lowboundhomog}
F_{\calN_{\vartheta^2}}(J+\tM) \ge \frac{\vartheta^2}{2J} - \frac{\vartheta^2 \tM}{2J^2} +O\left(\frac{(\log J)^\frac12}{J^2}\right)\, .
\end{equation}

%Since $\calN_v$ satisfies the assumptions made on $\calL$, this holds also if $\calL$ is replaced by $\calN_v$.

Similarly, we derive an upper bound on $F_{\calL}(J)$. 
Applying the upper bound in Lemma \ref{th:lemcompFdiscrete} for law $\mu$, then Lemma \ref{lem-W1} and finally the lower bound in Lemma \ref{th:lemcompFdiscrete} for law $\calN_{\vartheta^2}$, we have:
\begin{equation}\label{eq:proofth2upperbound}
\begin{aligned}
 F_{\calL}(J)
 & \leq \frac1L F_{\calL^{\ast L} }(J-\tM) + 4 \bbE_\mu\left[(H_L-M)_+ \right]+  e^{2(M-J)}\\
 & \leq \frac1L F_{\calN_{L\vartheta^2}}(J- \tM) + \frac1L W_1(\calL^{\ast L},\calN_{L\vartheta^2})
  + 4 \bbE_\mu\left[(H_L-M)_+ \right]+  e^{2(M-J)} \\
 & \leq F_{\calN_{\vartheta^2}}(J-\tM) +\frac1L W_1(\calL^{\ast L},\calN_{L\vartheta^2})
  + 4 \bbE_\mu\left[(H_L-M)_+ \right]+  e^{2(M-J)}\, .
\end{aligned}
\end{equation}
Recall Theorem \ref{th:mainNv}. We assume that $\tM\le \frac{J}2$. Using the fact that $\frac1{1-x}\le 1+2x $ for all $x\in[0, \frac12]$, we have 
\begin{equation}
F_{\calN_{\vartheta^2}}(J-\tM) \le \frac{\vartheta^2}{2J} + \frac{\vartheta^2 \tM}{J^2} + O\left(\frac{(\log J)^\frac12}{J^2}\right)\, .
\end{equation}
Now, we make a choice for $L$ and $M$. 
We set, for $J>1$:
\begin{equation}\label{eq:choiceLMtexpo}
L_J=\left\lfloor \frac{J^{\frac43}}{(\log J)^\frac13}\right\rfloor , \qquad M_J= 6 \vartheta J^{\frac23} (\log J)^\frac13,\qquad  
\end{equation}
With this choice, we have $e^{2(M-J)}=O(J^{-2})$ and, according to Lemma \ref{th:lemboundEmuexpo}, both $\bbE_{\calN_{\vartheta^2}}\left[(H_{L_J}-M_J)_+\right]$ and $\bbE_{\mu}\left[(H_L-M)_+\right]$ are $O(J^{-2})$. According to Lemma \ref{lem-CLT}, $W_1(\calL^{\ast L_J},\calN_{L_J\vartheta^2})=O(1)$. Theorem \ref{th:mainL} follows.
\end{proof}

\subsection{Proof of Theorem \ref{th:mainLpoly}}% Polynomial integrability assumption}

Notice that in the proof of the lower bound in Theorem \ref{th:mainL}, we only used the bound on $\bbE_{\calN_{\vartheta^2}}\left[(H_L-M)_+\right]$ but not on $\bbE_{\mu}\left[(H_L-M)_+\right]$. Thus, it is possible to carry this proof out in the same way in the case of polynomial integrability. The only adaptation that is needed is to take into account the disjunction in Lemma \ref{lem-CLT}.

\begin{proof}[Proof of the lower bound in Theorem \ref{th:mainLpoly}]
We use the lower bounds \eqref{eq:proofth2lowbound} and \eqref{eq:proofth2lowboundhomog} establised in the proof of Theorem \ref{th:mainL}. For the case $\mu\in L^3$, we choose $L_J$ and $M_J$ as in the proof of Theorem \ref{th:mainL}, see \eqref{eq:choiceLMtexpo}. For the case $\mu\in L^p$, with $p\in[2, 3)$, we set, for $J>1$:
\begin{equation}\label{eq:choiceLMtpolylow}
L_J=\left\lfloor \frac{J^{\frac{4}{p}}}{(\log J)^\frac1{p}}\right\rfloor , \qquad M_J= 6 \vartheta J^{\frac2{p}} (\log J)^{\frac{p-1}{2p}}\, .
\end{equation}
With these choices, we have $e^{2(M_J-J)}=O(J^{-2})$ and, according to Lemma \ref{th:lemboundEmuexpo}, $\bbE_{\calN_{\vartheta^2}}\left[(H_{L_J}-M_J)_+\right]=O(J^{-2})$. We also use Lemma \ref{lem-CLT} to bound $W_1(\calL^{\ast L_J},\calN_{L_J\vartheta^2})$. The proof of the lower bound in Theorem \ref{th:mainLpoly} follows.
\end{proof}

Now, we turn to the proof of the upper bound in Theorem \ref{th:mainLpoly}. 
\begin{lem}\label{th:lemboundEmupoly}
Assume that $\mu\in L^p$, with $p\in [2, \infty)$. Then, for every $L\in \bbN$ and every $M>0$, we have:
\begin{equation}\label{eq:lemboundPmuHpoly}
\bbP_\mu[H_L>  M] = O \left( \frac{L^{\frac{p}2}}{M^{p}}\right)\, , 
\end{equation} 
and
\begin{equation}\label{eq:lemboundEmuH+poly}
\bbE_\mu[(H_L -  M)_+] = O \left( \frac{L^{\frac{p}2}}{M^{p-1}}\right)\,.
\end{equation} 
\end{lem}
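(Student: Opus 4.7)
The plan is to reduce both bounds to classical maximal inequalities for partial sums of independent centered random variables with finite $p$-th moment.

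First I would introduce the partial sums $S_0 := 0$ and $S_k := \sum_{i=1}^k h_i$ for $1 \le k \le L$. Writing $\sum_{i=n}^m h_i = S_m - S_{n-1}$ and applying the triangle inequality gives
\begin{equation*}
H_L \le 2 \tilde H_L, \qquad \tilde H_L := \max_{0 \le k \le L} |S_k|\, .
\end{equation*}
Since $\mu$ is centered, $(S_k)_{0\le k\le L}$ is a martingale, so Doob's $L^p$ maximal inequality yields $\bbE_\mu[\tilde H_L^p] \le (p/(p-1))^p\, \bbE_\mu[|S_L|^p]$.

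Next I would invoke Rosenthal's inequality for sums of independent centered random variables with finite $p$-th moment ($p \ge 2$):
\begin{equation*}
\bbE_\mu[|S_L|^p] \le C_p \max\bigl( L^{p/2} \vartheta^p,\; L\, \bbE_\mu[|h_1|^p] \bigr) = O\bigl(L^{p/2}\bigr)\, ,
\end{equation*}
the last equality using $L \le L^{p/2}$ for $p \ge 2$ and $L \ge 1$. (When $p = 2$ this is just Bienaymé's identity, so no heavy machinery is needed.) Combining with Doob's inequality gives $\bbE_\mu[\tilde H_L^p] = O(L^{p/2})$, so Markov's inequality together with $H_L \le 2 \tilde H_L$ produces
\begin{equation*}
\bbP_\mu[H_L > M] \le \bbP_\mu[\tilde H_L > M/2] \le \frac{\bbE_\mu[\tilde H_L^p]}{(M/2)^p} = O\!\left(\frac{L^{p/2}}{M^p}\right),
\end{equation*}
which is the first estimate.

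For the second estimate I would apply the layer-cake formula and feed in the first bound:
\begin{equation*}
\bbE_\mu[(H_L - M)_+] = \int_0^\infty \bbP_\mu[H_L > M + s]\, ds = \int_M^\infty \bbP_\mu[H_L > t]\, dt = O\!\left(\frac{L^{p/2}}{M^{p-1}}\right),
\end{equation*}
where the final integration converges because $p > 1$. All steps are standard; if there is any subtle point it is the reduction from the double-indexed maximum defining $H_L$ to the single-indexed $\tilde H_L$, and checking that Rosenthal's bound produces the scaling $L^{p/2}$ rather than $L$ as soon as $p \ge 2$. Neither presents a real obstacle.
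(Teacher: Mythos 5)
Your proof is correct and follows essentially the same route as the paper: reducing $H_L$ to $2\max_{k\le L}|S_k|$, applying Doob's $L^p$ maximal inequality and Rosenthal's inequality to obtain $\bbE_\mu[H_L^p]=O(L^{p/2})$, and then deriving both estimates by Markov-type arguments. The only minor divergence is in the last step, where the paper deduces \eqref{eq:lemboundEmuH+poly} directly from the pointwise bound $(H_L-M)_+ \le H_L^p/M^{p-1}$ on $\{H_L>M\}$, while you integrate the tail estimate via the layer-cake formula; the two are equivalent in content and precision.
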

\begin{proof}
Let us introduce $S_n:=\sum_{i=1}^n h_i$ and observe that $(S_n)_{n\in\bbN}$ is a martingale and $H_L \le 2  \sup_{1\le n\le L} |S_n|$. Doob's inequality in $L^p$ yields:
\begin{equation}\label{eq:Doobs}
\bbE[H_L^p] \le 2^p \bbE[(\sup_{1 \le n\le L} |S_n|)^p]\le 2^p \left(\frac{p}{p-1}\right)^p \bbE[|S_L|^p].
\end{equation}
Now, Rosenthal's inequality for independent centered random variables yields: 
$\bbE[|S_L|^{p}] = O(L^{\frac{p}2})$, hence we have shown that $\bbE[H_L^p] = O(L^{\frac{p}2})$.
The first statement of the lemma follows by Markov inequality. To show the second one, use instead the following inequality:
\begin{equation}\label{eq:MarkovLp}
\bbE[(H_L-M)_+]\le \bbE[H_L \1{H_L>M}]\le \bbE\left[H_L \left(\frac{H_L}{M}\right)^{p-1}\1{H_L>M} \right]\le \frac{\bbE[H_L^p]}{M^{p-1}}\, .
\end{equation}
\end{proof}

With this lemma at hand, it is possible to adapt the proof of Theorem \ref{th:mainL} to obtain an upper bound on $F_\mu(J)$. However, we present a different approach that slightly improves the final result. We modify the upper bounds in Lemmas \ref{th:lemcompZdiscrete} and \ref{th:lemcompFdiscrete} in the following way.

\begin{lem}\label{th:lemcompZandFdiscreterevisited}  
For every realization of $h$, for every $(a, b)\in\{-1, +1\}^2$, for every $L\in\N$, $J\ge 0$ and $M \ge 0$, we have%, if $J-M-\frac12 \log L\ge 0$,
\begin{equation}\label{eq:lemcompZdiscreteuppbdrevisited}
\begin{aligned}
Z_{L, h}^{a, b}(J) 
& \le \exp\Bigg[-2\left(J\!-\!M\!-\!\frac12\! \log\! L\right) \1{a \neq  b}+b h_1^L\\
& \qquad \qquad + \left(L\log 2+ 2 \sum_{n=1}^L |h_n|\right) \1{H_{L}>M} + Le^{2(M-J)}
\Bigg]\, ,
\end{aligned}
\end{equation}
and, consequently,
\begin{equation}\label{eq:lemcompFdiscreteuppbdrevisited}
 F_{\calL}(J) 
 \leq \frac1L F_{\calL^{\ast L}}(J\!-\!M\!-\!\frac12\!\log L) + \bbE_\mu\Big[(\log 2+\frac2L \sum_{n=1}^L |h_n|) \1{H_{L}>M}\Big] +e^{2(M-J)}\,.
 \end{equation}
 %where $E_\mu(L, M):=\bbE_\mu\Big[(\log 2+\frac2L \sum_{n=1}^L |h_n|) \1{H_{L}>M}\Big]$.
 \end{lem}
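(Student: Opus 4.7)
The plan is to establish the new partition-function bound \eqref{eq:lemcompZdiscreteuppbdrevisited} by a case split on whether $H_L \le M$ or $H_L > M$, rather than by absorbing $(H_L-M)_+$ into a single multiplicative error factor as was done in Lemma~\ref{th:lemcompZdiscrete}. On the event $\{H_L \le M\}$ the earlier argument goes through verbatim with $H_L$ replaced by $M$ throughout, while on the complementary event I replace the Newton-binomial estimate with a brute-force bound of the form $Z_{L,h}^{a,b}(J) \le 2^L \cdot \max_\sigma \exp(\cdots)$, exploiting the fact that there are at most $2^{L-1}$ admissible configurations and that every one of them satisfies $\sum_n \sigma_n h_n - b h_1^L = \sum_n (\sigma_n-b) h_n \le 2\sum_{n=1}^L |h_n|$.

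More concretely, I would restart from \eqref{eq:proofuppboundZdisc}, that is $Z_{L,h}^{a,b}(J) \le \sum_j \binom{L}{j} \exp(-2jJ + bh_1^L + 2jH_L)$, with $j$ of the appropriate parity. On $\{H_L \le M\}$, substitute $H_L$ by $M$ inside the sum, apply Newton's formula and use $(1+e^{2(M-J)})^L \le \exp(L e^{2(M-J)})$, yielding the claimed bound with the indicator term absent. On $\{H_L > M\}$, bound the sum trivially by $2^L \exp(-2J\1{a\ne b} + bh_1^L + 2\sum|h_n|)$, using that $j \ge \1{a\ne b}$ in the appropriate parity class. Then one verifies that both case-bounds are dominated by the unified expression in \eqref{eq:lemcompZdiscreteuppbdrevisited}: on $\{H_L \le M\}$ the indicator vanishes and the Case 1 estimate is recovered verbatim, while on $\{H_L > M\}$ the slack compared to Case 2 is the non-negative quantity $(2M+\log L)\1{a\ne b} + L e^{2(M-J)}$.

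The deduction of the free-energy bound \eqref{eq:lemcompFdiscreteuppbdrevisited} is then essentially automatic and mirrors the proof of Lemma~\ref{th:lemcompFdiscrete}: insert \eqref{eq:lemcompZdiscreteuppbdrevisited} into the block decomposition \eqref{eq:rewriteZNL} (one block of size $L$ per index $n$), take logarithms, divide by $NL$, let $N\to\infty$, and combine the $\bbP_\mu$-almost sure convergence of $\frac{1}{N}\log Z_{N, h^L}^{a,b}(\cdot)$ to $F_{\calL^{\ast L}}(\cdot)$ with the law of large numbers applied to the i.i.d.\ block errors $(L\log 2 + 2\sum_{i \in \text{block}_n}|h_i|)\1{H_L^{(n)} > M}$. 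Dividing the latter by $L$ produces the expectation $\bbE_\mu\big[(\log 2 + \tfrac{2}{L}\sum_{i=1}^L|h_i|)\1{H_L > M}\big]$, as required.

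The only slightly delicate point is the bookkeeping that glues the two cases into a single exponential of the shape claimed; concretely, it reduces to checking the sign of $(-2M-\log L)\1{a\ne b} + L e^{2(M-J)}$, which is obvious. I expect no real obstacle: the substance of the lemma is not in its proof but in its use downstream, since replacing the $(H_L-M)_+$ error of Lemma~\ref{th:lemcompFdiscrete} by the indicator-weighted error $(\log 2 + \tfrac{2}{L}\sum|h_n|)\1{H_L>M}$ is precisely what will allow the tail estimate \eqref{eq:lemboundPmuHpoly} on $\bbP_\mu[H_L>M]$ from Lemma~\ref{th:lemboundEmupoly} to be applied directly, gaining a factor of $M$ over the naive route and thereby sharpening the exponent in the upper bound of Theorem~\ref{th:mainLpoly}.
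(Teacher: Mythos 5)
Your proposal matches the paper's proof essentially line for line: both case-split on $\{H_L\le M\}$ versus $\{H_L>M\}$, reuse \eqref{eq:proofuppboundZdiscrewrite} together with $(1+e^{2(M-J)})^L\le\exp(Le^{2(M-J)})$ in the first case, apply the brute-force $2^L$ bound with $j\ge\1{a\neq b}$ in the second, and then deduce the free-energy inequality by inserting into \eqref{eq:rewriteZNL} and invoking the law of large numbers exactly as in Lemma~\ref{th:lemcompFdiscrete}. (Minor typo only: the slack you must check non-negative is $(2M+\log L)\1{a\neq b}+Le^{2(M-J)}$, as you correctly wrote the first time, not the negated version that appears later.)
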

 
\begin{proof}
If $H\le M$, we use \eqref{eq:proofuppboundZdiscrewrite} to see that \eqref{eq:lemcompZdiscreteuppbdrevisited} holds. 
If instead $H_L>M$, we use the bound 
\begin{equation}
\sum_{n=1}^L \sigma_n h_n \leq b h^L_1 + 2 \sum_{n=1}^L |h_n|
\end{equation}
and the fact that if $a\neq b$ then $\sigma$ exhibits at least one spin flip. We get:
\begin{equation}
Z_{L, h}^{a, b}(J) 
\le 2^L \exp\left(-2J \1{a\neq b}+ b h_1^L + 2 \sum_{n=1}^L |h_n|\right) 
\end{equation}
so \eqref{eq:lemcompZdiscreteuppbdrevisited} holds also in this case.

To derive \eqref{eq:lemcompFdiscreteuppbdrevisited}, reason as for the upper bound in Lemma \ref{th:lemcompFdiscrete}.
\end{proof}
 
\begin{lem}\label{th:lemboundEmupolyrevisited}
Assume that $\mu\in L^p$, with $p\in [2, \infty)$. Then:
\begin{equation}
\bbE_\mu\Big[\left(\log 2+\frac2L \sum_{n=1}^L |h_n|\right) \1{H_{L}>M}\Big] = O \left( \frac{L^{\frac{p}2}}{M^{p}} + \frac1{M^{p-1}} \right)\, .
\end{equation} 
\end{lem}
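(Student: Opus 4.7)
My first move is to split the expectation linearly,
\begin{equation*}
\bbE_\mu\Big[\big(\log 2 + \tfrac{2}{L} \sum_{n=1}^L |h_n|\big) \1{H_L > M}\Big] = \log 2 \cdot \bbP_\mu[H_L > M] + \tfrac{2}{L} \sum_{n=1}^L \bbE_\mu\big[|h_n| \1{H_L > M}\big].
\end{equation*}
The first term is directly controlled by \eqref{eq:lemboundPmuHpoly} and contributes $O(L^{p/2}/M^p)$, so the real task is to obtain a bound, uniform in $n$, on $\bbE_\mu[|h_n| \1{H_L > M}]$.

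For each $n$, I would further split according to whether $|h_n|$ is larger or smaller than $M/2$. The contribution of $\{|h_n| > M/2\}$ is controlled by a direct Markov-type estimate, $\bbE_\mu[|h_n| \1{|h_n| > M/2}] \le 2^{p-1} \bbE_\mu[|h_1|^p]/M^{p-1} = O(1/M^{p-1})$, which produces exactly the second term of the claimed bound. The remaining piece $\bbE_\mu[|h_n| \1{|h_n| \le M/2,\, H_L > M}]$ is where independence must be exploited.

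The crux of the argument is the observation that on the event $\{|h_n| \le M/2,\, H_L > M\}$, the inequality $H_L > M$ is already witnessed by a subsum that avoids index $n$. Setting
\begin{equation*}
H_L^{(-n)} := \max_{1\le a\le b\le L,\ n \notin [a,b]} \Big| \sum_{i=a}^b h_i \Big|,
\end{equation*}
I claim that $\{|h_n| \le M/2,\, H_L > M\} \subset \{H_L^{(-n)} \ge M/4\}$. Indeed, if the extremal interval $[a^*, b^*]$ realizing $H_L > M$ contains $n$, then removing $h_n$ loses at most $M/2$ and splits $\sum_{i=a^*}^{b^*} h_i$ into two subsums avoiding $n$, at least one of which must have absolute value at least $M/4$; if instead $n \notin [a^*, b^*]$ the conclusion is immediate. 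Since $H_L^{(-n)}$ is a function of $(h_i)_{i \ne n}$ only, it is independent of $h_n$, and applying \eqref{eq:lemboundPmuHpoly} separately to the two blocks $\{1, \dots, n-1\}$ and $\{n+1, \dots, L\}$ (whose sizes are bounded by $L$) yields $\bbP_\mu[H_L^{(-n)} \ge M/4] = O(L^{p/2}/M^p)$. By independence,
\begin{equation*}
\bbE_\mu\big[|h_n| \1{|h_n| \le M/2,\, H_L > M}\big] \le \bbE_\mu[|h_1|] \cdot \bbP_\mu[H_L^{(-n)} \ge M/4] = O(L^{p/2}/M^p),
\end{equation*}
uniformly in $n$. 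Averaging over $n=1,\dots,L$ and combining with the two earlier contributions produces the claimed bound $O(L^{p/2}/M^p + 1/M^{p-1})$. The only genuinely non-routine step is the event-inclusion through $H_L^{(-n)}$; the rest is a straightforward truncation combined with Lemma \ref{th:lemboundEmupoly}.
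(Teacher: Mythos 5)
Your proof is correct and follows essentially the same strategy as the paper's: split off the $\log 2\cdot \bbP_\mu[H_L>M]$ piece, truncate $|h_n|$ at $M/2$, handle the tail by a Markov-type bound, and exploit the independence of $h_n$ from a ``maximal subsum with index $n$ removed.'' The only cosmetic difference is that you restrict to genuine intervals avoiding $n$ (hence the extra splitting step and the $M/4$ threshold), whereas the paper defines $H_L^{(n)}$ over intervals with $h_n$ deleted from the sum (so no splitting is needed and $H_L^{(n)}$ is immediately distributed as $H_{L-1}$); both variants yield the same $O(L^{p/2}/M^p)$ bound.
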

\begin{proof}
% O \left( \frac{L^{\frac{p}2}}{M^{p}} + \frac1{M^{p-1}} \right)\, .
Recall the bound $\bbP[H_L>M]=O\left(\frac{L^{\frac{p}2}}{M^p}\right)$ which controls the first term. 
We now bound for each $n=1, \dots, L$ the term $\bbE[|h_n|\1{H_{L}>M}]$. We set 
\begin{equation}
H_{L}^{(n)}:= \sup_{1\le m\le m' \le L} \left|\sum_{i\in\{m, \dots, m'\}\setminus\{n\}} h_i \right|
\end{equation}
and we observe that if $|h_n|\le \frac{M}2$ and $H_L>M$ then $H_{L}^{(n)}>\frac{M}2$, so:
\begin{equation}\label{eq:boundh1HLM}
\bbE[|h_n|\1{H_{L}>M}]\le  \bbE[|h_n|\1{H_{L}^{(n)} > \frac{M}2}] + \bbE[|h_n| \1{|h_n|\ge \frac{M}2}]\, .
\end{equation} 
We handle the first term in the right-hand side of \eqref{eq:boundh1HLM} by noticing that $h_n$ and $H_{L}^{(n)}$ are independent, with $H_{L}^{(n)}$ distributed as $H_{L-1}$, so that \eqref{eq:lemboundPmuHpoly} can be used. For the second one we write, proceeding as in \eqref{eq:MarkovLp}:
\begin{equation}
\bbE[|h_n| \1{|h_n|\ge \frac{M}2}]
\le  \frac{\bbE[|h_n|^p]}{\left(\frac{M}2\right)^{p-1}}\, .
\end{equation} 
The lemma follows.
\end{proof}

\begin{proof}[Proof of the upper bound in Theorem \ref{th:mainLpoly}]
We proceed as in the proof of the upper bound in Theorem \ref{th:mainL}, but using Lemma \ref{th:lemcompZandFdiscreterevisited} instead of Lemma \ref{th:lemcompFdiscrete}: the error term $4\bbE_\mu[(H_L-M)_+]$ in \eqref{eq:proofth2upperbound} is replaced by $\bbE_\mu\Big[(\log 2+\frac2L \sum_{n=1}^L |h_n|) \1{H_{L}>M}\Big]$. We assume that $\mu$ is in $L^p$, with $p\in[2, \infty)$ and we make a choice for $L$ and $M$, depending on $p$.
For $p\in[3, \infty)$, we set $\eta_p=\frac{4p}{3p+2}$ and for $J\ge 1$:
\begin{equation}\label{eq:choiceLMpoly>3}
L_J=\left\lfloor J^{\eta_p}\right\rfloor , \qquad M_J= J^{2-\eta_p}\, .
\end{equation}
For $p\in[2, 3)$, we set $\eta_p= \frac{2p(p-1)}{p^2+p-1}$ and for $J\ge 1$:
%If $\frac{3+\sqrt{5}}{2}< p< 3$, we take:
\begin{equation}\label{eq:choiceLMpoly<3}
L_J=\left\lfloor J^{\frac{2\eta_p}{p-1}}\right\rfloor , \qquad M_J= J^{2-\eta_p}\, .
\end{equation}
With these choices, we have $e^{2(M_J-J)}=O(J^{-2})$, and according to Lemma \ref{th:lemboundEmupolyrevisited}, the error term $\bbE_\mu\Big[(\log 2+\frac2{L_J} \sum_{n=1}^{L_J} |h_n|) \1{H_{L_J}>M_J}\Big]$ is $O(J^{-\eta_p})$, using the identity $(2-\eta_p)p- \eta_p \frac{p}2=\eta_p$ for $p\in[3, \infty)$, the identity $(2-\eta_p)p- \frac{2\eta_p}{p-1} \frac{p}2=\eta_p$ for $p\in[2, 3)$ and the inequality $(p-1)(2-\eta_p)\ge \eta_p$ in both cases. 
We finally use Lemma \ref{lem-CLT} to bound $W_1(\calL^{\ast L_J},\calN_{L_J\vartheta^2})$ and we obtain the upper bound in Theorem \ref{th:mainLpoly}.
\end{proof}

\section{Acknowledgements}
The author thanks Raphael L. Greenblatt and Giambattista Giacomin for careful rereading of the manuscript.

\end{document}